\definecolor{dblue}{rgb}{0,0,.6}
\newtheorem{theorem}{Theorem}
\theoremstyle{plain}
\newtheorem{corollary}[theorem]{Corollary}
\newtheorem{lemma}[theorem]{Lemma}
\newtheorem{proposition}[theorem]{Proposition}
\newtheorem{remark}[theorem]{Remark}
\newcommand{\del}{\partial}
\newcommand{\Z}{\mathbb Z}
\newcommand{\C}{\mathbb C}
\newcommand{\CP}{\mathbb P}
\newcommand{\Spec}{\operatorname{Spec}}
\newcommand{\CH}{\operatorname{CH}}
\newcommand{\discr}{\operatorname{discr}} 
\newcommand{\cl}{\operatorname{cl}} 
\newcommand{\ram}{\operatorname{ram}} 
\newcommand{\Frac}{\operatorname{Frac}}
\newcommand{\dashedlongrightarrow}{\xymatrix@1@=15pt{\ar@{-->}[r]&}}
\renewcommand{\longrightarrow}{\xymatrix@1@=15pt{\ar[r]&}}
\renewcommand{\mapsto}{\xymatrix@1@=15pt{\ar@{|->}[r]&}}
\renewcommand{\twoheadrightarrow}{\xymatrix@1@=15pt{\ar@{->>}[r]&}}
\newcommand{\hooklongrightarrow}{\xymatrix@1@=15pt{\ar@{^(->}[r]&}}
\newcommand{\congpf}{\xymatrix@1@=15pt{\ar[r]^-\sim&}}
\renewcommand{\cong}{\simeq}
\begin{document}  
\title[Quadric surface bundles over surfaces and stable rationality]{Quadric surface bundles over surfaces and stable rationality} 

\author{Stefan Schreieder}

\address{Mathematisches Institut, LMU M\"unchen, Theresienstr. 39, 80333 M\"unchen, Germany}
\email{schreieder@math.lmu.de}

\date{October 10, 2017; \copyright{\ Stefan Schreieder 2017}}
\subjclass[2010]{primary 14E08, 14M20; secondary 14J35, 14D06} 
%

\keywords{rationality problem, stable rationality, decomposition of the diagonal, unramified cohomology, Brauer group, L\"uroth problem.}

\begin{abstract}  
We prove a general specialization theorem which implies stable irrationality for a wide class of quadric surface bundles over rational surfaces.  
As an application, we solve, with the exception of two cases, the stable rationality problem for any very general complex projective quadric surface bundle over $\CP^2$, 
given by a symmetric matrix of homogeneous polynomials.  
Both exceptions degenerate over a plane sextic curve and the corresponding double cover is a K3 surface.
\end{abstract}

\maketitle

\section{Introduction}

Recently, Hassett, Pirutka and Tschinkel \cite{HPT,HPT2,HPT3} found the first three examples of families of quadric surface bundles over $\CP^2$,  
where the very general member is not stably rational.  
In each case, the degeneration locus is a plane octic curve. 
Smooth quadric surface bundles over rational surfaces typically deform to smooth bundles with a section, hence to smooth rational fourfolds.  
This allowed them to produce the first examples of smooth non-rational varieties that deform to rational ones.

In \cite{Sch17}, we introduced a variant of the method of Voisin \cite{voisin} and Colliot-Th\'el\`ene--Pirutka \cite{CT-Pirutka}, which allowed us to disprove stable rationality via a degeneration argument where a universally $\CH_0$-trivial resolution of the special fibre is not needed.  
The purpose of this paper is to show that one can use this technique to simplify the arguments in \cite{HPT,HPT2,HPT3} and to 
apply them to large classes of quadric surface bundles. 

The main result is the following general specialization theorem without resolutions; see Section \ref{subsec:conventions} below for what it means that a variety specializes to another variety.

\begin{theorem} \label{thm:quadric}
Let $X$ and $Y$ be complex projective varieties of dimension four.
Suppose that $X$ specializes to $Y$ and that there is a morphism $f:Y\longrightarrow S$ to a rational surface $S$, such that: 
\begin{enumerate} 
\item the generic fibre of $f$ is a smooth quadric surface $Q$ over $K=\C(S)$; \label{item:Q}
\item the discriminant $d\in K^\ast \slash (K^\ast)^2$ of $Q$ is nontrivial; \label{item:d}
\item $H^2_{nr}(\C(Y)\slash \C,\Z\slash 2)\neq 0$ 
\label{item:H_nr}
\end{enumerate}
Then $X$ is not stably rational.
\end{theorem}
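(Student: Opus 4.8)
The plan is to argue by contradiction: assume $X$ is stably rational and derive a contradiction with hypothesis \eqref{item:H_nr}. A stably rational variety is universally $\CH_0$-trivial, so it admits an integral decomposition of the diagonal
$$\Delta_X=X\times x_0+Z\in\CH_{\dim X}(X\times X),\qquad \supp Z\subset D\times X,\ D\subsetneq X,$$
by Bloch--Srinivas and Colliot-Th\'el\`ene--Pirutka. The essential feature of the method of \cite{Sch17} is that this is a relation in the Chow group of $X\times X$, and it therefore specializes to $Y\times Y$ under the given specialization of $X$ to $Y$ \emph{without any resolution of singularities}: one obtains an integral decomposition of the diagonal of the (possibly singular) fourfold $Y$.

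It then remains to show that no integral projective fourfold $Y$ carrying a morphism $f\colon Y\to S$ as in \eqref{item:Q}--\eqref{item:d} can simultaneously admit a decomposition of the diagonal and a nonzero class in $H^2_{nr}(\C(Y)/\C,\Z/2)$. For smooth proper $Y$ this follows from the standard fact that a decomposition of the diagonal annihilates unramified cohomology, since $X\times x_0$ acts as zero and $Z$, being supported on $D\times X$, contributes nothing at the generic point of the first factor; the obstacle here, and the reason the fibration $f$ is present, is that the correspondence action of cycles on $H^2_{nr}$ is not directly defined over the singular $Y$. This is precisely the difficulty resolved in \cite{Sch17}, whose specialization theorem I would invoke: it reduces the claim that $X$ is not stably rational to the nonvanishing \eqref{item:H_nr}, once one knows that all fibres of $f$ are geometrically rational with suitably controlled zero-cycles, so that the error term $Z$, supported over a proper closed subset of $S$, does not interfere with the action on $H^2_{nr}$.

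Verifying this input is where \eqref{item:Q} and \eqref{item:d} enter. By \eqref{item:Q} the generic fibre is a smooth quadric surface $Q$ over $K=\C(S)$; over its own function field $\C(Y)=K(Q)$ it has a rational point, and over the quadratic extension $K(\sqrt d)$ it becomes split, hence rational, so that its zero-cycles are controlled after the double cover $\widetilde S\to S$ determined by $d$. The degenerate fibres, over the generic point of the discriminant curve, are quadric cones, again geometrically rational with trivial $\CH_0$; thus every fibre of $f$ is handled by the quadric structure alone, with no need to exhibit a universally $\CH_0$-trivial resolution. Hypothesis \eqref{item:d} guarantees that $\widetilde S\to S$ is a genuine degree-two cover, so the class in \eqref{item:H_nr} is not pulled back from the rational surface $S$, where unramified $H^2$ vanishes. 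Feeding these facts into the specialization theorem of \cite{Sch17} forces the nonzero class of \eqref{item:H_nr} to vanish, the desired contradiction.

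The step I expect to be the main obstacle is making the action of the specialized diagonal decomposition on $H^2_{nr}(\C(Y)/\C,\Z/2)$ rigorous over the singular variety $Y$, without recourse to a universally $\CH_0$-trivial resolution of the special fibre. This is exactly the technical heart of \cite{Sch17}; accordingly, the work specific to the present theorem is the verification, via \eqref{item:Q} and \eqref{item:d}, that the fibres of a quadric surface bundle satisfy the hypotheses of that theorem, after which \eqref{item:H_nr} supplies the obstruction.
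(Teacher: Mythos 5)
Your outer strategy --- specialize an integral decomposition of the diagonal from $X$ to the singular fourfold $Y$, then invoke the method of \cite{Sch17} so that no universally $\CH_0$-trivial resolution of $Y$ is needed --- is indeed the shell of the paper's argument. But the input you propose to feed into that method is not what it requires, and this is a genuine gap. The key technique of \cite[Section 4]{Sch17} does not ask that the fibres of $f$ be geometrically rational with controlled zero-cycles; it asks for a resolution $\tau:\widetilde Y\longrightarrow Y$ restricting to a universally $\CH_0$-trivial resolution over a dense open $U\subset Y$ (here one may take $U$ smooth inside the locus where $f$ is a smooth quadric bundle, so $\widetilde U=U$), \emph{together with} the vanishing $\alpha'|_{E_i}=0$ of the nontrivial unramified class $\alpha'$ on every component $E_i$ of $E=\widetilde Y\setminus\tau^{-1}(U)$. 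Those components are threefolds lying over curves and points of $S$; they are not fibres of $f$, their geometry is completely uncontrolled (the theorem makes no assumption on $Y$ away from the generic fibre), and geometric rationality of the fibres of $f$ gives no hold whatsoever on $\alpha'|_{E_i}$. This vanishing statement is exactly Proposition \ref{prop:alpha}, the technical heart of the paper, and your proposal never formulates it, let alone proves it. Note also that if mere geometric rationality of fibres sufficed, hypothesis (\ref{item:d}) would be superfluous, since every quadric surface bundle has geometrically rational fibres.

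The proof of that vanishing is where hypotheses (\ref{item:Q}) and (\ref{item:d}) genuinely enter, and your reading of (\ref{item:d}) is backwards. By Theorem \ref{thm:Arason}, nontriviality of the discriminant $d$ forces $g^\ast:H^2(K,\mu_2^{\otimes 2})\longrightarrow H^2_{nr}(K(Q)\slash K,\mu_2^{\otimes 2})$ to be an isomorphism, so the nonzero class of (\ref{item:H_nr}) \emph{is} a pullback $g^\ast\alpha$ with $\alpha\in H^2(K,\mu_2^{\otimes 2})$ --- not ``not pulled back from $S$,'' as you assert. This is precisely what makes Pirutka's residue analysis (Theorem \ref{thm:Pirutka}) applicable: for a divisor $E$ whose generic point maps to a codimension one point $x\in S^{(1)}$, either $\del^2_x\alpha=0$, and then $\alpha$ already dies in $H^2(\widehat K)$ for $\widehat K=\Frac(\widehat{\mathcal O_{S,x}})$ (henselianness plus Tsen's theorem, Lemma \ref{lem:milne}); or $\del^2_x\alpha\neq 0$, and then Pirutka's theorem gives both $\del^2_x(\alpha-\beta)=0$ for the Clifford invariant $\beta$ \emph{and} that $d$ becomes a square in $\widehat K$, so that Theorem \ref{thm:Arason} applied over $\widehat K$ puts $\beta$ in the kernel of pullback to the quadric; in either case $\alpha'|_E=0$. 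Nothing in your proposal plays this role. Incidentally, your claim that $Q$ becomes split, hence rational, over $K(\sqrt d)$ is false in general --- a four-dimensional form with trivial discriminant is similar to the norm form of a quaternion algebra and can remain anisotropic --- and even if it were true it would not yield the required vanishing on divisors not dominating $S$.
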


Since $H^2_{nr}(\C(Y)\slash \C,\Z\slash 2)=H^2_{nr}(K(Y)\slash \C,\Z\slash 2)$, the assumptions in the above theorem concern only the generic fibre of $f$.
In particular, $f$ need not be flat and there is no assumption on the singularities of $Y$ at points which do not dominate $S$.
A universally $\CH_0$-trivial resolution of $Y$ is not needed.    
For a more general version which works also if the discriminant of $Q$ is possibly trivial, $X$ and $Y$ have arbitrary dimension and the generic fibre of $f$ is only stably birational to $Q$, see Theorem \ref{thm:quadric:2} below.   

The second unramified cohomology group in item (\ref{item:H_nr}) coincides with the $2$-torsion subgroup of the Brauer group of any resolution of singularities of $Y$.
Pirutka computed this group explicitly for any quadric surface over $\C(\CP^2)$ which satisfies (\ref{item:d}), see \cite[Theorem 3.17]{Pirutka}.
This gives rise to many examples to which the above theorem applies.
In this paper we will apply it only to a single example of Hassett, Pirutka and Tschinkel \cite[Proposition 11]{HPT}.

The proof of Theorem \ref{thm:quadric} uses Pirutka's results \cite{Pirutka} on the unramified cohomology of quadric surfaces over $\C(\CP^2)$, together with our aforementioned method from \cite{Sch17}, which builds on \cite{voisin} and \cite{CT-Pirutka}.

To give an application of Theorem \ref{thm:quadric}, let us consider a generically non-degenerate line bundle valued quadratic form $q:\mathcal E \longrightarrow \mathcal O_{\CP^2}(n)$, where $\mathcal E=\bigoplus_{i=0}^3 \mathcal O_{\CP^2}(-r_i)$ is split and such that the quadratic form $q_s$ on the fibre $\mathcal E_s$ is nonzero for all $s\in \CP^2$. 
Then, $X=\{q=0\}\subset \CP(\mathcal E)$ defines a quadric surface bundle over $\CP^2$.  
We may also regard $q$ as a symmetric matrix $A=(a_{ij})$, where $a_{ij}$ is a global section of $\mathcal O_{\CP^2}(r_i+r_j+n)$. 
Locally over $\CP^2$, $X$ is given by
\begin{align} \label{eq:X}
\sum_{i,j=0}^3a_{ij} z_iz_j=0 ,
\end{align}
where $z_i$ denotes a local coordinate which trivializes $\mathcal O_{\CP^2}(-r_i)\subset \mathcal E$.  

If $X$ is smooth, its deformation type depends only on the integers $d_i:=2r_i+n$;
 we call any such quadric surface bundle of type $(d_0,d_1,d_2,d_3)$.  
The degeneration locus of $X\longrightarrow \CP^2$ is a plane curve of degree $\sum_i d_i$, which is always even.
If some $d_i$ is negative, then $a_{ii}=0$ and so $X\longrightarrow \CP^2$ admits a section, hence $X$ is rational.
We may thus from now on restrict ourselves to the case $d_i\geq 0$ for all $i$.

\begin{corollary} \label{cor:P(E)} 
Let $d_0,d_1,d_2$ and $d_3$ be non-negative integers of the same parity, and let $X\longrightarrow \CP^2$ be a very general complex projective quadric surface bundle of type $(d_0,d_1,d_2,d_3)$.   
If $\sum_i d_i \neq 6$, then the following holds:
\begin{enumerate} 
\item $X$ is rational if $\sum_i d_i\leq 4$ or if $d_i=d_j=0$ for some $i\neq j$; \label{item:thm:PE:1}
\item $X$ is not stably rational otherwise. \label{item:thm:PE:2}
\end{enumerate}    
\end{corollary}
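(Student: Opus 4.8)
The plan is to handle the two items separately, using explicit birational geometry for rationality and the specialization machinery of Theorem~\ref{thm:quadric} (together with its stably birational refinement Theorem~\ref{thm:quadric:2}) for irrationality.

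For item~(\ref{item:thm:PE:1}), first suppose $d_i=d_j=0$ for some $i\neq j$, say $i=0,\ j=1$. Then $a_{00},a_{01},a_{11}$ all have degree $0$, so the binary form $a_{00}z_0^2+2a_{01}z_0z_1+a_{11}z_1^2$ has constant coefficients and, being a nonzero quadratic form over the algebraically closed field $\C$, factors as a product of two linear forms. Each factor cuts out a constant section $\CP^2\to X$ of $f:X\to\CP^2$, so the generic fibre $Q$ has a $K$-rational point and is therefore $K$-rational; since $K=\C(\CP^2)$ is purely transcendental, $X$ is rational. The only remaining type with $\sum_i d_i\leq4$ and no two vanishing $d_i$ is $(1,1,1,1)$. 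Here I would take $\mathcal E=\mathcal O_{\CP^2}^{\oplus4}$ and $n=1$, so that every $a_{ij}$ is linear in the coordinates $x$ of $\CP^2$; writing the equation as $\sum_k x_kQ_k(z)=0$ with $Q_k$ quadratic in $z$, the projection $X\to\CP^3,\ (x,z)\mapsto z$, has linear fibres and thus exhibits $X$ as birational to a $\CP^1$-bundle over $\CP^3$, so $X$ is rational.

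For item~(\ref{item:thm:PE:2}), the strategy is to degenerate the very general $X$ to a quadric bundle $Y\to\CP^2$ whose generic fibre is (stably birational to) the single quadric $Q_{\mathrm{HPT}}$ coming from the example of Hassett--Pirutka--Tschinkel \cite[Proposition~11]{HPT}, which has type $(2,2,2,2)$, nontrivial discriminant, and $H^2_{nr}\neq0$ by Pirutka \cite{Pirutka}. The key point is that both the discriminant class in $K^\ast/(K^\ast)^2$ and $H^2_{nr}(\C(Y)/\C,\Z/2)$ depend only on the $K$-isomorphism class of the generic fibre, hence are unchanged under the two moves that preserve a quadric over $K$: overall scaling $a_{ij}\mapsto c\,a_{ij}$ and diagonal congruence $a_{ij}\mapsto f_if_j\,a_{ij}$ with $c,f_i\in K^\ast$. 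Once such a $Y$ is produced, Theorem~\ref{thm:quadric} (or Theorem~\ref{thm:quadric:2}, when the identification is only up to stable birational equivalence) shows that $X$ is not stably rational. The even case with all $d_i>0$ is then immediate: writing $d_i=2c_i$ with $c_i\geq1$ and $\sum_ic_i\geq4$, I would specialize $a_{ij}=\ell_i\ell_j\,b_{ij}$, where $(b_{ij})$ is the matrix of $Q_{\mathrm{HPT}}$ and $\ell_i$ is a general form of degree $c_i-1\geq0$; this has the correct degrees $(d_i+d_j)/2=c_i+c_j$, the discriminant is $(\prod_i\ell_i)^2\det(b_{ij})\equiv\det(b_{ij})$, and the substitution $z_i\mapsto z_i/\ell_i$ identifies the generic fibre with $Q_{\mathrm{HPT}}$ over $K$. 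To reduce the rest to a finite list I would prove a propagation step: replacing $a_{ii}\mapsto\ell^2a_{ii}$ and $a_{ij}\mapsto\ell\,a_{ij}$ ($j\neq i$) for a general linear form $\ell$ raises $d_i$ by $2$ while keeping the generic fibre $K$-isomorphic, so the hypotheses of the theorem propagate upward under these increments. A short bookkeeping check shows that, the even-with-all-$d_i>0$ family being already settled, every remaining type in item~(\ref{item:thm:PE:2}) lies above one of the minimal types $(0,2,2,4)$, $(1,1,3,3)$ or $(1,1,1,5)$.

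The main obstacle is therefore these three minimal types, where the unequal degrees together with the vanishing (respectively odd) entries obstruct a direct diagonal-congruence identification with the $(2,2,2,2)$ model: matching degrees forces a factor $f_i$ of degree $-1$ or an overall scaling of odd degree, which is only polynomial after the model acquires prescribed divisibilities. For each I would instead exhibit an explicit specialization of the symmetric matrix whose generic fibre is merely \emph{stably birational} to $Q_{\mathrm{HPT}}$: in the one-zero case $(0,2,2,4)$ one can complete the square in the variable with constant $a_{00}$ to split off a unit $\langle1\rangle$ and reduce to a ternary form whose discriminant and Clifford invariant can be matched to those of $Q_{\mathrm{HPT}}$; the two odd cases are treated similarly after an overall scaling of odd degree combined with a fibrewise linear change of coordinates over $K$. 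In each instance $H^2_{nr}\neq0$ would follow from Pirutka's explicit computation \cite{Pirutka} and the stable-birational invariance of unramified cohomology, after which Theorem~\ref{thm:quadric:2} applies. The delicate part I expect is precisely the verification that the matched invariants genuinely force stable birational equivalence of the generic fibres while the discriminant stays nontrivial.
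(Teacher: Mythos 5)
Your treatment of item (\ref{item:thm:PE:1}) is correct and agrees with the paper's, and the first half of your treatment of item (\ref{item:thm:PE:2}) is sound: the diagonal-congruence degeneration $a_{ij}=\ell_i\ell_j b_{ij}$ settles all even types with every $d_i\geq 2$, and your propagation step plus bookkeeping correctly reduces everything else to the minimal types $(0,2,2,4)$, $(1,1,3,3)$ and $(1,1,1,5)$. The genuine gap is precisely at these three types: you do not construct the required degenerations, you only sketch a plan (complete a square, match the discriminant and Clifford invariant of a residual ternary form, deduce stable birational equivalence of the generic fibres), and you yourself flag its key verification as open. As written this step does not go through: matching two invariants of a ternary complement is not by itself a proof of stable birational equivalence of the quadrics, and, more importantly, nothing in the sketch produces a symmetric matrix whose entries have the prescribed homogeneous degrees and realize those invariants --- which is exactly the difficulty you set out to overcome.

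The missing idea is that the degree obstruction you describe (``a factor $f_i$ of degree $-1$ or an overall scaling of odd degree'') is an artifact of insisting that the degeneration be a globally polynomial congruence transform of the HPT matrix. Only the generic fibre matters, i.e.\ the affine form obtained by setting $z=1$, and homogeneous degrees can be adjusted freely by distributing arbitrary --- in particular odd --- powers of $z$ among the diagonal entries, since those powers become $1$ in the affine chart. Concretely, the paper degenerates to the diagonal forms $\langle 1, xz, xy, yzF\rangle$ for type $(0,2,2,4)$, $\langle z, x, xyz, yF\rangle$ for $(1,1,3,3)$, and $\langle z, x, y, xyzF\rangle$ for $(1,1,1,5)$ (special cases of its families $q_2$, $q_1$, $q_3$); in each case setting $z=1$ gives a form which, after multiplying through by $y$ (resp.\ $xy$) and absorbing squares, is similar to the HPT form $\langle y,x,xy,F(x,y,1)\rangle$ of Proposition \ref{prop:HPT}. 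Thus the generic fibre is honestly isomorphic --- not merely stably birational --- to the HPT quadric, its discriminant is nontrivial, $H^2_{nr}\neq 0$, and Theorem \ref{thm:quadric} applies directly, with no invariant matching and no need for the stably birational refinement of Theorem \ref{thm:quadric:2}. (In fact the paper does not even need your reduction to minimal types: its three families $q_1,q_2,q_3$ cover all types with $\sum_i d_i\geq 8$, $d_1\geq 1$, $d_3\geq 3$ at once, the choice among them being dictated by the parity of $d_0$ and the size of $d_2$.)
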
 

As we will see in the proof, the bundles in item (\ref{item:thm:PE:1}) of the above corollary have a rational section, and so already the generic fibre of $X$ over $\CP^2$ is rational.

Up to reordering, the only cases left open by the above corollary are types $(1,1,1,3)$ and $(0,2,2,2)$.
The former corresponds to blow-ups of cubic fourfolds containing a plane (see \cite{ACTP}) and the latter are Verra fourfolds \cite{CKKM,IKKR}, i.e.\ double covers of $\CP^2\times \CP^2$, branched along a hypersurface of bidegree $(2,2)$.
In both exceptions, the degeneration locus of the quadric bundle is a sextic curve in $\CP^2$, and so the associated double cover (cf.\ \cite{APS}) is a K3 surface.

Specializing to $a_{33}=0$ in (\ref{eq:X}) shows that all examples in the above corollary deform to smooth quadric surface bundles with a section, hence to smooth rational fourfolds.

Many quadric surface bundles over $\CP^2$ are birational to fourfolds which arise naturally in projective geometry, cf.\ \cite[Section  3.5]{Sch17}.
For instance, Corollary \ref{cor:P(E)} implies the following 
\begin{enumerate}[(I)]
\item a very general complex hypersurface of bidegree $(d,2)$ in $\CP^2\times \CP^3$ is not stably rational if $d\geq 2$; \label{item:I}
\item a very general complex hypersurface $X\subset \CP^5$ of degree $d+2$ and with multiplicity $d$ along a $2$-plane is not stably rational if $d\geq 2$;\label{item:III}
\item a double cover $X\stackrel{2:1}\longrightarrow \CP^4$, branched along a very general complex hypersurface $Y\subset \CP^4$ of even degree $d+2$ and with multiplicity $d$ along a line is not stably rational if $d\geq 2$.\label{item:II}
\end{enumerate}
The case $d=2$ in items (\ref{item:I}) and (\ref{item:II}) corresponds to the aforementioned results in \cite{HPT} and \cite{HPT2}, respectively. 
For stable rationality properties of smooth hypersurfaces and double covers, see \cite{beauville4,CT-Pirutka,CT-Pirutka2,HPT2,okada,totaro,voisin}; for results on conic bundles, see \cite{AO,artin-mumford,ABGP,beauville4,BB,HKT,voisin}.

In \cite{Sch17}, we studied rationality properties of quadric bundles with arbitrary fibre dimensions.
Our uniform treatment sufficed to prove (\ref{item:I}) and (\ref{item:III}) for $d\geq 5$, and (\ref{item:II}) for $d\geq 8$.  
On the other hand, the results in \cite{Sch17} left open infinitely many cases in Corollary \ref{cor:P(E)}.   
For instance, the types $(1,1,d_2,d_3)$ and $(0,2,d_2,d_3)$ with $d_2\leq 7$ and arbitrary $d_3$ are not covered by \cite{Sch17} and there are more cases which were not accessible, see \cite[Remark 36]{Sch17}.  

Our method applies also to quadric surface bundles over other rational surfaces $S$. 
We treat in this paper the case $S=\CP^1\times \CP^1$ and obtain similar results as those in Corollary \ref{cor:P(E)} above, see Corollaries \ref{cor:P1xP1} and \ref{cor:P1xP1:2} below.

\subsection{Conventions and notations} \label{subsec:conventions}
All schemes are separated.
A variety is an integral scheme of finite type over a field.
A property is said to hold at a very general point of a scheme, if it holds at all closed points in some countable intersection of dense open subsets. 

Let $k$ be an algebraically closed field.
We say that a variety $X$ over a field $L$ specializes (or degenerates) to a variety $Y$ over $k$, if there is a discrete valuation ring $R$ with residue field $k$ and fraction field $F$ with an injection of fields $F\hookrightarrow L$, together with a flat proper morphism $\mathcal X\longrightarrow \Spec R$ of finite type, such that $Y$ is isomorphic to the special fibre $Y\cong \mathcal X\times_R k$ and $X\cong \mathcal X\times_R L$ is isomorphic to a base change of the generic fibre. 
If $\mathcal Y\longrightarrow B$ is a flat  proper morphism of complex varieties with integral fibres, then for any closed points $0,t\in B$ with $t$ very general, the fibre $Y_t$ specializes to $Y_0$ in the above sense, see \cite[Lemma 8]{Sch17}.

A morphism $f:X\longrightarrow Y$ of varieties over a field $k$ is universally $\CH_0$-trivial,  if $f_\ast :\CH_0(X\times L)\stackrel{\cong}\longrightarrow \CH_0(Y\times L)$ is an isomorphism for all field extensions $L$ of $k$.

A  quadric surface bundle is a flat morphism $f:X\longrightarrow S$ between projective varieties such that the generic fibre is a smooth quadric surface; the degeneration locus is given by all $s\in S$ such that $f^{-1}(s)$ is singular.
If $f$ is not assumed flat, then we call $X$ a weak quadric surface bundle over $S$.
Quadric surface bundles over surfaces have been studied in detail in \cite{APS}.

We denote by $\mu_2\subset \mathbb G_m$ the group of second roots of unity. 
If $X$ is a proper variety over a field $k$ of characteristic different from $2$, the unramified cohomology group $H^i_{nr}(k(X)\slash k,\mu_2^{\otimes i})$ is the subgroup of all elements of the Galois cohomology group $H^i(k(X),\mu_2^{\otimes i})$ which have trivial residue at all discrete valuations of rank one on $k(X)$ over $k$, see \cite{CTO}.
This is a stable birational invariant of $X$, see \cite[Proposition 1.2]{CTO}.
If $X$ is smooth and proper over $k$, then $H^i_{nr}(k(X)\slash k,\mu_2^{\otimes i})$ coincides with the subgroup of elements of $H^i(k(X),\mu_2^{\otimes i})$ that have trivial residue at any codimension one point of $X$, see \cite[Theorem 4.1.1]{CT}. 

\section{Second unramified cohomology of quadric surfaces}

Let $K$ be a field of characteristic different from $2$.
It will be convenient to identify the Galois cohomology group $H^i(K,\mu_2^{\otimes i})$  with the \'etale cohomology group $H^i_{\text{\'et}}(\Spec(K),\mu_2^{\otimes i})$.
We also use the identification $H^1(K,\mu_2)\cong K^\ast\slash(K^\ast)^2 $, induced by the Kummer sequence.
For $a,b\in K^\ast$, we denote by $(a,b)\in H^2(K,\mu_2^{\otimes 2})$ the cup product of the classes given by $a$ and $b$.  
If $S$ is a normal variety over a field $k$ and with fraction field $k(S)=K$, then, for any $\alpha \in H^2(K,\mu_2^{\otimes 2})$, the ramification divisor $\ram(\alpha)\subset S$ is given by (the closure of) all codimension one points $x\in S^{(1)}$ with $\del_x^2\alpha \neq 0$.
Here, $\del^2_x:H^{2}(K,\mu_2^{\otimes 2})\longrightarrow H^1(\kappa(x),\mu_2)$ denotes the residue induced by the local ring $\mathcal O_{S,x}\subset K$.

To any non-degenerate quadratic form $q$ over $K$, one associates the discriminant $\discr(q)\in K^\ast\slash(K^\ast)^2$ and the Clifford invariant $\cl(q)\in  H^2(K,\mu_2^{\otimes 2})$.
If $q$ has even dimension, then the discriminant $\discr(q)$ depends only on the quadric hypersurface $Q=\{q=0\}$ and the Clifford invariant satisfies $\cl(\lambda\cdot q)=\cl(q)+(\lambda,\discr(q))$ for all $\lambda\in K^\ast$, see \cite[Chapter 5, (3.16)]{lam}.  
If $Q$ is a surface, then, up to similarity, $q\cong \langle1,-a,-b,abd \rangle$ for some $a,b,d\in K^\ast$.
In this case, $\discr(q)=d$ and $\cl(q)=(-a,-b)+(ab,d)$.  
We will need the following, see \cite{arason} and \cite[Corollary 8]{KRS}.

\begin{theorem}
\label{thm:Arason}
Let $K$ be a field with $\operatorname{char}(K)\neq 2$, and let $f:Q\longrightarrow \Spec K$ be a smooth projective quadric surface over $K$.
Denote by $d\in K^\ast\slash(K^\ast)^2$ the discriminant of $Q$ and by $\beta\in H^2(K,\mu_2^{\otimes 2})$ the Clifford invariant of some quadratic form $q$ with $Q=\{q=0\}$.
Then, 
$$
f^\ast: H^2(K,\mu_2^{\otimes 2})\longrightarrow H^2_{nr}(K(Q)\slash K,\mu_2^{\otimes 2})
$$ 
is an isomorphism if $d$ is nontrivial.
If $d\in (K^\ast)^2$, then $\ker(f^\ast)=\{1,\beta\}$. 
\end{theorem}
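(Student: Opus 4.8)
The plan is to separate the statement into a computation of $\ker(f^\ast)$ and a surjectivity statement onto the unramified subgroup. First I would record that $\im(f^\ast)$ automatically lands in $H^2_{nr}(K(Q)\slash K,\mu_2^{\otimes 2})$: a class pulled back from the base field $K$ has trivial residue at every codimension one point of the smooth proper quadric $Q$. The theorem then reduces to (i) determining which classes of $H^2(K,\mu_2^{\otimes 2})$ become trivial in $H^2(K(Q),\mu_2^{\otimes 2})$, and (ii) showing that no unramified class on $Q$ escapes $\im(f^\ast)$.

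For the kernel I would write $q\cong \langle 1,-a,-b,abd\rangle$ up to similarity, so that $\beta=\cl(q)=(-a,-b)+(ab,d)$, and use that $q$ becomes isotropic over $K(Q)$. When $d\in (K^\ast)^2$ one normalizes $d=1$, so that $q$ is similar to the $2$-fold Pfister form $\langle\langle a,b\rangle\rangle$; an isotropic Pfister form is hyperbolic, hence the associated quaternion algebra, whose class is $\beta$, splits over $K(Q)$ and $\beta\in \ker(f^\ast)$. Arason's analysis of the cohomology of Pfister quadrics \cite{arason} shows that this is the whole kernel, giving $\ker(f^\ast)=\{1,\beta\}$. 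When $d\notin (K^\ast)^2$ the same analysis forces the kernel to be trivial: a nonzero kernel class would oblige $q$ to contain a suitable Pfister subform, which is incompatible with $\discr(q)=d$ being nontrivial in dimension four. This yields injectivity of $f^\ast$ exactly when $d$ is nontrivial.

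It remains to prove surjectivity of $f^\ast$ onto $H^2_{nr}(K(Q)\slash K,\mu_2^{\otimes 2})$ for nontrivial $d$, and this is the main obstacle. I would realize the unramified group as the kernel of the total residue map $H^2(K(Q),\mu_2^{\otimes 2})\to \bigoplus_{x\in Q^{(1)}}H^1(\kappa(x),\mu_2)$ via the Gersten--Bloch--Ogus complex of $Q$, and control the terms using the motivic decomposition of a quadric surface. Passing to $L=K(\sqrt d)$, over which the discriminant trivializes and $Q_L$ becomes a product of two conics with already understood unramified $H^2$, reduces the problem to a descent statement from $L$ to $K$, the point being that the nontriviality of $d$ prevents any unramified class from being created along the degeneration of the rulings. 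This is precisely the computation of Kahn--Rost--Sujatha, and I would invoke \cite[Corollary 8]{KRS} to conclude that every unramified class lies in $\im(f^\ast)$; together with injectivity this gives the claimed isomorphism.
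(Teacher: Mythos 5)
Your proposal is correct and follows the same route as the paper: the paper does not prove this theorem but quotes it directly from the literature, citing \cite{arason} for the kernel computation and \cite[Corollary 8]{KRS} for the surjectivity of $f^\ast$ onto the unramified group when $d$ is nontrivial, which are exactly the two inputs your argument reduces to. Your additional sketches (the Pfister-form argument for the kernel and the descent idea over $K(\sqrt d\,)$ for surjectivity) are consistent with how those cited results are proved, so there is nothing to correct.
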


Pirutka computed the unramified cohomology group $H^2_{nr}(K(Q)\slash \C,\mu_2^{\otimes 2})$ of a smooth quadric surface $Q$ with nonzero discriminant over the function field of a smooth complex surface, see \cite[Theorem 3.17]{Pirutka}. 
The following reflects one half of her result.

\begin{theorem}[Pirutka] \label{thm:Pirutka} 
Let $f:Q\longrightarrow \Spec K$ be a smooth projective quadric surface over the function field $K$ of some smooth surface $S$ over $\C$. 
Let $d\in K^\ast \slash (K^{\ast})^2$ denote the discriminant and $\beta \in H^2(K,\mu_2^{\otimes 2})$ the Clifford invariant of some quadratic form $q$ with $Q=\{q=0\}$. 
If for some $\alpha\in H^2(K,\mu_2^{\otimes 2})$ the pullback $f^\ast (\alpha) \in H^2_{nr}(K(Q)\slash K,\mu_2^{\otimes 2})$  is unramified over $\C$, then the following holds: 
\begin{enumerate}
\item[($\ast$)] If the residue $\del^2_x\alpha$ at some codimension one point $x\in S^{(1)}$ is nonzero, then 
\begin{enumerate}
\item $\del^2_x\alpha =\del^2_x \beta$;  
\item \label{item:Pirutka:d} 
 $d$ becomes a square in the fraction field  of the completion $\widehat {\mathcal O_{S,x}}$. 
\end{enumerate}  
\end{enumerate} 
\end{theorem}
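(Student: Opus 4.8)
The plan is to localize the problem at the codimension-one point $x$ and to reduce everything to a residue computation on the special fibre of a model of $Q$ over the discrete valuation ring $\mathcal O_{S,x}$. First I would pass to the completion: set $\mathcal O=\widehat{\mathcal O_{S,x}}$, $\widehat K=\Frac(\mathcal O)$, with residue field $\kappa=\kappa(x)$, the function field of a smooth complex curve. Since residues and the unramified condition are compatible with completion, it suffices to argue over $\widehat K$. The point of passing to $\widehat K$ is that $\kappa$ has cohomological $2$-dimension $1$ (it is a $C_1$-field by Tsen), so $H^2(\kappa,\mu_2^{\otimes 2})=0$ and the residue map $\del^2_x\colon H^2(\widehat K,\mu_2^{\otimes 2})\xrightarrow{\ \sim\ }H^1(\kappa,\mu_2)$ is an isomorphism. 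Consequently both $\alpha$ and $\beta$, viewed over $\widehat K$, are determined by their residues; in particular conclusion (a) is equivalent to the equality $\alpha=\beta$ in $H^2(\widehat K,\mu_2^{\otimes 2})$, and conclusion (b) says exactly that the discriminant of $Q$ becomes trivial over $\widehat K$.

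Next I would use functoriality of residues. For any rank-one valuation $w$ on $K(Q)$ restricting to $v_x$ on $K$, with ramification index $e$ and residue field $\kappa_w$, one has $\del_w(f^\ast\alpha)=e\cdot\res_{\kappa_w/\kappa}(\del^2_x\alpha)$. The hypothesis that $f^\ast\alpha$ is unramified over $\C$ forces every such residue to vanish. The valuations $w\mid v_x$, together with their residue fields $\kappa_w$ and indices $e$, are read off from the reduction of $Q$: after scaling the variables I would put the quadratic form into the normal form $q\cong q_1\perp t\,q_2$ with $t$ a uniformizer and $q_1,q_2$ having unit coefficients, so that the reduction is governed by the two residue forms $\del^1 q=\bar q_1$ and $\del^2 q=\bar q_2$ over $\kappa$. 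The key numerical facts I would record are that, up to squares, $\discr(\del^1 q)\cdot\discr(\del^2 q)$ equals the discriminant $d$, and that the discriminant of $\del^2 q$ computes the residue $\del^2_x\beta$ of the Clifford invariant (a tame-symbol calculation).

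With this in place the argument becomes a case analysis on the rank of the reduction. If the special fibre is geometrically integral --- a smooth quadric surface, or a quadric cone (ranks $4$ and $3$) --- then $\kappa$ is algebraically closed in $\kappa_w=\kappa(\bar Q)$, so $\res_{\kappa_w/\kappa}$ is injective and the vanishing of the residue forces $\del^2_x\alpha=0$, contradicting the hypothesis $\del^2_x\alpha\neq 0$. Hence a nonzero residue can only survive in the rank-two reduction, where the special fibre degenerates into two planes with \emph{swap class} $\delta=-\discr(\del^1 q)$, and where resolving the model produces a second divisorial valuation carrying the swap class $\delta'=-\discr(\del^2 q)=\del^2_x\beta$. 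Vanishing of the residue of $f^\ast\alpha$ along the first divisor puts $\del^2_x\alpha$ in $\{1,\delta\}$, and along the second divisor in $\{1,\delta'\}$; since $\del^2_x\alpha\neq 0$ this yields $\del^2_x\alpha=\delta=\delta'$. The equality with $\delta'=\del^2_x\beta$ is conclusion (a), and $\delta=\delta'$ means $\discr(\del^1 q)=\discr(\del^2 q)$, i.e.\ $d$ is a square in $\widehat K$, which is conclusion (b).

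The hard part is the geometric bookkeeping behind the second and third paragraphs: producing a model (after suitable blow-ups) on which every divisorial valuation over $v_x$ is accounted for, identifying its residue field and ramification index, and matching the swap classes of the degenerate components with the discriminants of the residue forms and with $\del^2_x\beta$. This is where one must handle the non-reduced and odd-rank reductions carefully, checking via similarity transformations (which alter the Clifford invariant by a controlled symbol $(\lambda,d)$) that they either reduce to the rank-two case or are excluded by forcing $\del^2_x\alpha=0$.
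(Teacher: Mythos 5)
Your proposal is correct, but it is organized quite differently from what the paper actually does: the paper does not reprove this statement at all. Its proof is two lines long --- by Hensel's lemma, condition (b) is equivalent to saying that, up to squares, $d$ is a unit in $\mathcal O_{S,x}$ whose image in $\kappa(x)$ is a square, and then the whole statement is quoted from \cite[Section 3.6.2]{Pirutka}, together with the observation that the two extra hypotheses of Pirutka's Theorem 3.17 (that $d$ is not a square, and that $\ram(\beta)$ is a simple normal crossing divisor) are not used in that part of her argument. What you wrote is, in substance, a self-contained reconstruction of that cited local analysis: completion at $x$, the isomorphism $H^2(\widehat K,\mu_2^{\otimes 2})\cong H^1(\kappa(x),\mu_2)$ coming from Tsen's theorem, the normal form $q\cong q_1\perp t\,q_2$, the compatibility $\del_w(f^\ast\alpha)=e\cdot\res_{\kappa_w/\kappa}(\del^2_x\alpha)$, injectivity of restriction to function fields of geometrically integral reductions (which kills the rank $(4,0)$ and $(3,1)$ cases), and the two rank-$(2,2)$ models given by $q_1\perp t q_2$ and $q_2\perp t q_1$, whose reduced special fibres yield $\del^2_x\alpha\in\{1,\delta\}\cap\{1,\delta'\}$ and hence conclusions (a) and (b). Your route buys self-containedness (it even avoids Theorem \ref{thm:Arason}, which the paper's remark only invokes to explain why Pirutka's ``$d$ not a square'' hypothesis is irrelevant here); the paper's route buys brevity and isolates exactly which hypotheses of the cited theorem are being dropped. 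Two corrections to your sketch: first, the step you flag as ``the hard part'' --- producing a model on which \emph{every} divisorial valuation over $v_x$ is accounted for --- is not needed for this implication, since the hypothesis gives vanishing of residues at \emph{all} valuations over $\C$ and you therefore only need to exhibit the few well-chosen valuations you already constructed; exhaustive bookkeeping of valuations matters only for the converse statement (the paper's Remark \ref{rem:Pirutka}, which is where the simple normal crossing assumption genuinely enters). Second, your claim that $\discr(\bar q_1)\cdot\discr(\bar q_2)$ equals $d$ up to squares holds only when the rank of $q_2$ is even (otherwise $d$ has odd valuation at $x$); this is harmless because you only use it in the rank-$(2,2)$ case, but it should be stated there rather than as a general fact.
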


\begin{proof}  
The condition on $d$ is by Hensel's lemma equivalent to asking that, up to multiplication by a square,  $d$ is a unit in $\mathcal O_{S,x}$ whose image in $\kappa(x)$ is a square.
The theorem follows therefore from \cite[Section 3.6.2]{Pirutka}.
(In \cite[Theorem 3.17]{Pirutka}, the assumption that $d$ is not a square is only used to invoke bijectivity of $f^\ast$ via Theorem \ref{thm:Arason}; the assumption that $\ram(\beta)$ is a simple normal crossing divisor on $S$ is only used in \cite[Section 3.6.1]{Pirutka}.)
\end{proof}

\begin{remark} \label{rem:Pirutka} 
Up to replacing $S$ by some blow-up, one can always assume that $\ram(\beta)$ is a simple normal crossing divisor on $S$.
Under this assumption, Pirutka's analysis in \cite[Section 3.6.1]{Pirutka} shows that the following converse of the above theorem is also true: 
if $\alpha\in H^2(K,\mu_2^{\otimes 2})$ is such that condition ($\ast$) holds, then $f^\ast \alpha\in H^2_{nr}(K(Q)\slash K,\mu_2^{\otimes 2})$ is unramified over $\C$; non-triviality can be checked via Theorem \ref{thm:Arason}.
\end{remark}

Pirutka's result \cite[Theorem 3.17]{Pirutka} applies to the following important example, due to Hassett, Pirutka and Tschinkel \cite[Proposition 11]{HPT}; for a reinterpretation in terms of conic bundles, see \cite{ABGP}.

\begin{proposition}[Hassett--Pirutka--Tschinkel] \label{prop:HPT}
Let $K=\C(x,y)$ be the function field of $\CP^2$ and consider the quadratic form $q=\langle y,x,xy,F(x,y,1)\rangle$ over $K$, where 
$$
F(x,y,z)=x^2+y^2+z^2-2(xy+xz+yz).
$$ 
If $f:Q\longrightarrow \Spec K$ denotes the corresponding projective quadric surface over $K$, then 
$$
0\neq f^\ast ((x,y)) \in H^2_{nr}(K(Q)\slash \C, \mu_2^{\otimes 2}) .
$$
\end{proposition}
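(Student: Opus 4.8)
The plan is to verify two things separately: that $f^\ast((x,y))$ is unramified over $\C$, and that it is nonzero. I work throughout modulo squares and set $\alpha:=(x,y)$. First I would compute the discriminant. For $q=\langle y,x,xy,F\rangle$ one has $\discr(q)=y\cdot x\cdot xy\cdot F=x^2y^2F\equiv F\pmod{(K^\ast)^2}$, so $d:=\discr(q)=F(x,y,1)$. Since $F$ is an irreducible quadratic polynomial (its zero locus is a smooth conic, as the associated symmetric matrix has nonzero determinant), $d$ is nontrivial in $K^\ast/(K^\ast)^2$. In particular both Theorem \ref{thm:Arason} and Theorem \ref{thm:Pirutka} apply.

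Next I would compute the Clifford invariant $\beta=\cl(q)$. Writing $q$ in the similarity normal form, say $q\cong y\,\langle 1,-a,-b,abd\rangle$ with $a=-x/y$, $b=-x$, $d=F$, and combining the formula $\cl(\langle1,-a,-b,abd\rangle)=(-a,-b)+(ab,d)$ with $\cl(\lambda q')=\cl(q')+(\lambda,d)$, a short bilinearity computation (using $(c,c)=(c,-1)$) gives $\beta=(x,y)$ up to symbols of the form $(-1,\cdot)$. As $-1$ is a square in $\C$, those symbols vanish, so $\beta=(x,y)=\alpha$; the Clifford invariant of $q$ is exactly the class we are pulling back. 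The nontriviality of $f^\ast(\alpha)$ is then immediate: the class $(x,y)$ is nonzero in $H^2(K,\mu_2^{\otimes2})$ because its residue $\del^2_{\{x=0\}}(x,y)$ is the class of $y$ in $\C(y)^\ast/(\C(y)^\ast)^2$, which is nontrivial since $y$ is not a square in $\C(y)$; and as $d$ is nontrivial, Theorem \ref{thm:Arason} shows $f^\ast$ is injective, whence $f^\ast((x,y))\neq0$.

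It remains to show $f^\ast((x,y))$ is unramified over $\C$, and here I would invoke the converse of Pirutka's theorem from Remark \ref{rem:Pirutka}, that is, verify condition $(\ast)$ of Theorem \ref{thm:Pirutka} for $\alpha=(x,y)$. Working in homogeneous coordinates $[X:Y:Z]$ with $x=X/Z$, $y=Y/Z$, the ramification locus $\ram((x,y))$ consists precisely of the three coordinate lines $\{x=0\}$, $\{y=0\}$ and the line at infinity $\{z=0\}$. These form the coordinate triangle, hence a simple normal crossing divisor on $\CP^2$, so no blow-up of $S=\CP^2$ is required. Since $\beta=\alpha$, part (a) of $(\ast)$ holds tautologically at each of these divisors. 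For part (b) I must check that $d=F$ becomes a square in the completion at each line; by the reformulation in the proof of Theorem \ref{thm:Pirutka} it suffices that $F$ is a unit whose residue is a square along each, and indeed
\[
F(0,y,1)=(y-1)^2,\qquad F(x,0,1)=(x-1)^2,\qquad F(X,Y,0)=(X-Y)^2 .
\]
Thus condition $(\ast)$ holds, and Remark \ref{rem:Pirutka} yields $f^\ast((x,y))\in H^2_{nr}(K(Q)/\C,\mu_2^{\otimes2})$, which together with the previous paragraph completes the argument.

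The substantive point — and the reason this particular $F$ is chosen — is part (b): the discriminant restricts to a perfect square along every divisor where $(x,y)$ ramifies. The Clifford invariant computation (keeping track of the $(-1,\cdot)$ symbols, which drop out only because we work over $\C$) and the identification of $\ram((x,y))$ as an SNC triangle are routine; the real obstacle is this arithmetic coincidence that $F$ becomes a square on each of $\{x=0\}$, $\{y=0\}$ and $\{z=0\}$.
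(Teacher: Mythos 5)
Your proposal is correct, and it is worth noting that the paper itself contains no proof of this statement at all: Proposition \ref{prop:HPT} is quoted verbatim from \cite[Proposition 11]{HPT}, and the surrounding text merely remarks that Pirutka's Theorem 3.17 ``applies to'' this example. What you have written is exactly the verification that the paper outsources to that citation, carried out with the tools the paper itself provides: the discriminant is $d=F(x,y,1)$, nontrivial since the conic $F=0$ is smooth; the Clifford invariant $\beta$ equals $(x,y)$ once the $(-1,\cdot)$ symbols are discarded over $\C$; nontriviality of $f^\ast((x,y))$ follows from the residue $\del^2_{\{x=0\}}(x,y)=\bar y\neq 0$ together with injectivity of $f^\ast$ from Theorem \ref{thm:Arason}; and unramifiedness over $\C$ follows from Remark \ref{rem:Pirutka}, since $\ram(\beta)$ is the SNC coordinate triangle, condition (a) of $(\ast)$ is tautological because $\alpha=\beta$, and condition (b) holds because $F(0,y,1)=(y-1)^2$, $F(x,0,1)=(x-1)^2$ and $F(X,Y,0)=(X-Y)^2$. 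This is also, in substance, how Hassett--Pirutka--Tschinkel argue in the cited source, so your route is the intended one rather than a genuinely different one. The only cosmetic imprecision is at the line at infinity: $F(X,Y,0)=(X-Y)^2$ is a homogeneous expression, not an element of the residue field $\C(Y/X)$; one should dehomogenize (e.g.\ in the chart $u=Y/X$, $v=Z/X$ one has $d\equiv F(1,u,v)$ modulo squares, with restriction $(1-u)^2$), but this does not affect the argument. You also correctly isolate the real content, namely that the discriminant restricts to a square along every component of $\ram((x,y))$, which is precisely the arithmetic coincidence this $F$ is engineered to produce.
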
 

\section{A vanishing result} 
The following general vanishing result is the key ingredient of this paper. 

\begin{proposition} \label{prop:alpha}
Let $Y$ be a smooth complex projective variety and let $S$ be a smooth complex projective surface.
Let $f:Y\dashrightarrow S$ be a dominant rational map whose generic fibre $Y_\eta$ is stably birational to a smooth quadric surface $Q$ over $K=\C(S)$.  
Suppose that there is some $\alpha \in H^2(K,\mu_2^{\otimes 2})$, such that $\alpha':=f^\ast \alpha\in H_{nr}^2(K(Y_\eta)\slash K,\mu_2^{\otimes 2})$ is unramified over $\C$. 
Then, for any prime divisor $E\subset Y$ which does not dominate $S$, the restriction  of $\alpha'$ to $E$ vanishes: 
$$
\alpha'|_E=0\in H^2(\C(E),\mu_2^{\otimes 2}). 
$$
\end{proposition}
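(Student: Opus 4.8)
The plan is to reduce the statement to a local computation over the generic point of the image curve $\overline{f(E)}$, and to treat separately the cases in which $\alpha$ is, respectively is not, unramified there.

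\emph{Reductions.} First I would resolve the indeterminacy of $f$: choose a projective birational morphism $\pi:\tilde Y\to Y$ with $\tilde Y$ smooth such that $\tilde f:=f\circ\pi:\tilde Y\to S$ is a morphism, and let $\tilde E\subset\tilde Y$ be the strict transform of $E$. Since $\pi$ is birational, $\C(\tilde E)=\C(E)$ and the restriction of $\alpha'$ to $\tilde E$ agrees with $\alpha'|_E$; moreover $\alpha'$ is unramified over $\C$, so in particular $\del^2_{\tilde E}\alpha'=0$ and $\alpha'|_E$ is well defined. Set $W:=\overline{\tilde f(\tilde E)}\subset S$; as $E$ does not dominate $S$ we have $\dim W\le 1$. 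Blowing up $S$ and pulling back the whole picture (which changes neither $K=\C(S)$ nor $\alpha$ nor the relevant function fields) I may assume $W$ is a prime divisor, so that $\eta_W$ is a codimension one point and $\mathcal O_{S,\eta_W}$ a discrete valuation ring with residue field $\C(W)$. The field-theoretic input I will use repeatedly is that $\C(W)$, being the function field of a curve over the algebraically closed field $\C$, is a $C_1$ field of cohomological dimension $\le 1$, so that $H^2(\C(W),\mu_2^{\otimes 2})=0$.

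\emph{The unramified case.} If $\del^2_{\eta_W}\alpha=0$, then $\alpha$ extends to a class in $H^2(\mathcal O_{S,\eta_W},\mu_2^{\otimes 2})$ and restricts to $\alpha|_W\in H^2(\C(W),\mu_2^{\otimes 2})=0$. Since $\tilde f$ maps the generic point $\eta_{\tilde E}$ of $\tilde E$ to $\eta_W$, functoriality of pullback and of the (well defined) restriction map gives $\alpha'|_E=\rho^\ast(\alpha|_W)=0$, where $\rho:\C(W)\hookrightarrow\C(E)$ is induced by $\tilde f$.

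\emph{The ramified case.} The interesting case is $\chi:=\del^2_{\eta_W}\alpha\neq 0$. Here I invoke Pirutka's Theorem \ref{thm:Pirutka}: condition ($\ast$) forces $\chi=\del^2_{\eta_W}\beta$ and, crucially, that $d$ becomes a square in the fraction field $\widehat K_W$ of $\widehat{\mathcal O_{S,\eta_W}}$. I then pass to completions. For the complete discretely valued field $\widehat K_W$ with residue field $\C(W)$, the residue sequence together with $H^2(\C(W),\mu_2^{\otimes 2})=0$ yields an isomorphism $\del:H^2(\widehat K_W,\mu_2^{\otimes 2})\to H^1(\C(W),\mu_2)$, so that over $\widehat K_W$ one has $\alpha=(\pi)\cup\chi$ for a uniformizer $\pi$ of $\mathcal O_{S,\eta_W}$. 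The local homomorphism $\mathcal O_{S,\eta_W}\to\mathcal O_{\tilde Y,\eta_{\tilde E}}$ induces a compatible map of completions; writing $\tilde f^\ast\pi=u\,s^{e}$ with $s$ a uniformizer at $\tilde E$, $u$ a unit and $e=v_{\tilde E}(\tilde f^\ast\pi)$, I compute $\del^2_{\tilde E}\alpha'=e\cdot\rho^\ast\chi$ and, on the unramified part, $\alpha'|_E=(u|_E)\cup\rho^\ast\chi$. Because $\alpha'$ is unramified, $e\cdot\rho^\ast\chi=0$.

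\emph{Conclusion and main obstacle.} If $e$ is odd this forces $\rho^\ast\chi=0$, whence both contributions vanish and $\alpha'|_E=0$. The remaining possibility, $e$ even with $\rho^\ast\chi\neq 0$, is the real obstacle: unramifiedness of $\alpha'$ no longer supplies information, and one must show directly that $(u|_E)\cup\rho^\ast\chi=0$. Here I would exploit the remaining content of Pirutka's theorem, namely that $d$ is a square over $\widehat K_W$: this means the discriminant cover is split over a formal neighbourhood of $W$, so the quadric bundle acquires its two rulings (equivalently, becomes a Pfister quadric) there and the vertical divisor $\tilde E$ sits inside this split fibre. Analyzing the fibre of $\tilde f$ over $\eta_W$ under this splitting, I expect to show that the unit $u|_E$ is, modulo squares, pulled back from $\C(W)$; then $(u|_E)\cup\rho^\ast\chi=\rho^\ast\bigl((\bar u)\cup\chi\bigr)$ lies in $\rho^\ast H^2(\C(W),\mu_2^{\otimes 2})=0$, completing the proof. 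Controlling this unit via the split quadric is the step I expect to require the most care.
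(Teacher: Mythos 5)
Your reductions, your unramified case, and your ramified case with $e=v_{\tilde E}(\tilde f^\ast\pi)$ odd are all correct, and they run parallel to the paper's own use of Tsen's theorem for $\C(W)$. But the case you yourself flag as ``the real obstacle'' --- $\del^2_{\eta_W}\alpha\neq 0$, $e$ even, $\rho^\ast\chi\neq 0$ --- is exactly where the entire content of the proposition sits, and your sketch for it is not a proof. Worse, the claim you hope to establish, that $u|_E$ lies in $\C(W)^\ast\cdot(\C(E)^\ast)^2$, is far stronger than the needed vanishing $(u|_E)\cup\rho^\ast\chi=0$ and is not something the hypotheses can deliver: $E$ is an \emph{arbitrary} prime divisor not dominating $S$, so $\C(E)$ is the function field of some component of the fibre over $\eta_W$, about which the hypotheses (which only constrain the \emph{generic} fibre) say nothing; splitting of the discriminant over $\widehat K_W$ gives no control on how $\tilde f^\ast\pi$ factors along such a divisor. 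Note also that your argument in this case uses Pirutka's theorem only through the conclusion that $d$ is a square in $\widehat K_W$; you quote $\del^2_{\eta_W}\alpha=\del^2_{\eta_W}\beta$ but never use it, and you never use the stable birational equivalence of $Y_\eta$ with $Q$ beyond invoking Pirutka --- yet this equivalence must enter essentially, since the vanishing is false for general fibrations.

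The missing idea (and the paper's actual mechanism) is to avoid computing $\alpha'|_E$ by symbols and instead kill the whole class in the completion at $E$ by factoring through the function field of the quadric over the completed base. Since $Y_\eta$ is stably birational to $Q$, after replacing $Y$ by $Y\times\CP^m$ there is a dominant rational map $Y_\eta\dashrightarrow Q$, giving a sequence
$$
H^2(K,\mu_2^{\otimes 2})\longrightarrow H^2(\widehat K_W,\mu_2^{\otimes 2})\longrightarrow H^2(\widehat K_W(Q),\mu_2^{\otimes 2})\longrightarrow H^2(\widehat L,\mu_2^{\otimes 2}),
$$
where $\widehat L$ is the fraction field of the completion of the local ring at the generic point of $E$. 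Pirutka's theorem gives $\del^2_{\eta_W}(\alpha-\beta)=0$, so $\alpha-\beta$ extends over $\widehat{\mathcal O}_{S,\eta_W}$ and dies in $H^2(\widehat K_W,\mu_2^{\otimes 2})$, because for the henselian local ring $\widehat{\mathcal O}_{S,\eta_W}$ one has $H^2_{\text{\'et}}(\Spec \widehat{\mathcal O}_{S,\eta_W},\mu_2^{\otimes 2})\cong H^2(\C(W),\mu_2^{\otimes 2})=0$ by Tsen. Hence $\alpha=\beta$ over $\widehat K_W$; and since $d$ becomes a square in $\widehat K_W$, Theorem \ref{thm:Arason} says $\beta$ lies in the kernel of $H^2(\widehat K_W,\mu_2^{\otimes 2})\to H^2(\widehat K_W(Q),\mu_2^{\otimes 2})$. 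So the image of $\alpha$ in $H^2(\widehat L,\mu_2^{\otimes 2})$ vanishes; as this image is the unramified class whose restriction to the closed point is $\alpha'|_E$, and $H^2_{\text{\'et}}(\Spec\widehat B,\mu_2^{\otimes 2})$ injects into $H^2(\widehat L,\mu_2^{\otimes 2})$, you get $\alpha'|_E=0$ with no unit to control; this argument also absorbs your easy cases uniformly.
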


\begin{proof} 
Since unramified cohomology is a functorial stable birational invariant (see \cite{CTO}), we may up to replacing $Y$ by $Y\times \CP^m$ assume that $Y_\eta$ is birational to $Q\times \CP^r_K$ for some $r\geq 0$.
This birational map induces a dominant rational map $Y_{\eta}\dashrightarrow Q$. 

Since $Y$ is smooth, $f$ is defined at the generic point $y$ of $E$.  
By \cite[Propositions 1.4 and 1.7]{merkurjev} (cf.\ \cite[Section 5]{Sch17}), we may up to replacing $S$ by a different smooth projective model assume that the image $x:=f(y)\in S^{(1)}$ is a codimension one point on $S$. 
Consider the local ring $A:=\mathcal O_{S,x}$ and let $\widehat A$ be its completion with field of fractions $\widehat K:=\Frac(\widehat A)$. 
The local ring $B:= \mathcal O_{ Y,y}$ contains $A$.
We let $\widehat B$ be the completion of $B$ and $\widehat L:=\Frac(\widehat B)$ be its field of fractions. 
Since $Y_{\eta}\dashrightarrow Q$ is dominant, inclusion of fields induces the following sequence
\begin{align}\label{seq:Khat}
H^2(K,\mu_2^{\otimes 2})\stackrel{\varphi_1}\longrightarrow H^2(\widehat K,\mu_2^{\otimes 2}) \stackrel{\varphi_2}\longrightarrow H^2(\widehat K(Q),\mu_2^{\otimes 2}) \stackrel{\varphi_3}\longrightarrow H^2(\widehat L,\mu_2^{\otimes 2}) .
\end{align} 

\begin{lemma}\label{lem:milne}
If some $\gamma\in H^2(K,\mu_2^{\otimes 2})$ satisfies $\del_x^2\gamma=0$, then $ \varphi_1(\gamma)=0\in H^2(\widehat K,\mu_2^{\otimes 2}) $. 
\end{lemma}
\begin{proof} 
Since $\del_x^2\gamma=0$, the image of $\gamma$ in $H^2(\widehat K,\mu_2^{\otimes 2})$ is contained in $H^2_{\text{\'et}}(\Spec \widehat A,\mu_2^{\otimes 2})\subset H^2(\widehat K,\mu_2^{\otimes 2})$, see \cite[\S 3.3 and \S 3.8]{CT}.
It thus suffices to show that $H^2_{\text{\'et}}(\Spec \widehat A,\mu_2^{\otimes 2})$ vanishes. 
Since $\widehat A$ is a henselian local ring,  
restriction to the closed point gives an isomorphism
$
H^2_{\text{\'et}}(\Spec \widehat A,\mu_2^{\otimes 2})\cong H^2(\kappa(x),\mu_2^{\otimes 2})
$, see \cite[Corollary VI.2.7]{milne}.
By Tsen's theorem, $H^2(\kappa(x),\mu_2^{\otimes 2})=0$.
This concludes the lemma.
\end{proof}

Since $f^\ast \alpha$ is unramified, we know that 
\begin{align} \label{eq:phiialpha}
\varphi_3\circ \varphi_2\circ \varphi_1(\alpha) \in H^2_{\text{\'et}}(\Spec \widehat B,\mu_2^{\otimes 2})\subset H^2(\widehat L,\mu_2^{\otimes 2}),
\end{align}
see \cite[\S 3.3 and \S 3.8]{CT} and the compatibility of the residue map illustrated in \cite[p.\ 143]{CTO}.  
We aim to show that this class vanishes, which is enough to conclude the proposition, because $\alpha'|_E$ is obtained as the restriction of the above class to the closed point $\Spec \C(E)$. 

In order to show that (\ref{eq:phiialpha}) vanishes, we choose some quadratic form $q$ with $Q=\{q=0\}$ and denote by $d\in K^{\ast}\slash(K^{\ast})^2$ and $\beta\in H^2(K,\mu_2^{\otimes 2})$ the discriminant and the Clifford invariant of $q$, respectively.
If $\del_x^2\alpha=0$, then (\ref{eq:phiialpha}) vanishes by Lemma \ref{lem:milne}.
If $\del_x^2\alpha\neq 0$, then $\del_x^2(\alpha-\beta)=0$ by Theorem \ref{thm:Pirutka}, because $Y_\eta$ is stably birational to $Q$ and unramified cohomology is a stable birational invariant.
By Lemma \ref{lem:milne}, it then suffices to show that $\beta$ maps to zero via (\ref{seq:Khat}). 
By Theorem \ref{thm:Pirutka}, $d$ becomes a square in $\widehat K$, and so the latter follows from Theorem \ref{thm:Arason}, applied to $\varphi_2$ in (\ref{seq:Khat}). 
This concludes the proof of the proposition.
\end{proof}

\section{Proof of Theorem \ref{thm:quadric}}

The following is a generalization of Theorem \ref{thm:quadric}, stated in the introduction.
For what it exactly means that a variety specializes to another variety, see Section \ref{subsec:conventions} above . 

\begin{theorem} \label{thm:quadric:2}
Let $X$ be a proper variety which specializes to a complex projective variety $Y$.
Suppose that there is a dominant rational map  $f:Y\dashrightarrow \CP^2$ with the following properties: 
\begin{enumerate} [\ \ \ (a)]
\item some Zariski open and dense subset $U\subset Y$ admits a universally $\CH_0$-trivial resolution of singularities $\widetilde U\longrightarrow U$ such that the induced rational map $\widetilde U\dashrightarrow \CP^2$ is a morphism whose generic fibre is proper over $\C(\CP^2)$. 
 \label{item:tildeU}
\item the generic fibre $Y_{\eta}$ of $f$ is stably birational to a smooth projective quadric surface $g:Q\longrightarrow  \Spec K$ over $K=\C(\CP^2)$, such that
 \label{item:Yeta=Q}
there is a class $\alpha \in H^2(K,\mu_2^{\otimes 2})$ whose pullback $g^\ast \alpha$ is nontrivial and unramified over $\C$:  
$$
0\neq g^\ast \alpha \in H^2_{nr}(K(Q)\slash \C,\mu_2^{\otimes 2})=H^2_{nr}(\C(Y)\slash \C,\mu_2^{\otimes 2}) .
$$
\end{enumerate} 
Then, no resolution of singularities of $X$ admits an integral decomposition of the diagonal.
In particular, $X$ is not stably rational.
\end{theorem}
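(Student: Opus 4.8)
The plan is to combine the specialization method of \cite{Sch17} (which builds on \cite{voisin} and \cite{CT-Pirutka}) with the vanishing result of Proposition \ref{prop:alpha}. Recall that by the specialization principle for the property ``admits an integral decomposition of the diagonal'' \cite[Theorem/Proposition in \S]{Sch17}, if the special fibre $Y$ fails to admit such a decomposition in a suitable sense, then so does every resolution of the generic member $X$. Since a stably rational smooth projective variety always admits an integral decomposition of the diagonal, the final statement follows once I show that no resolution of $Y$ (or rather the relevant object built from $Y$) admits one. The point of the method in \cite{Sch17} is that a full universally $\CH_0$-trivial resolution of $Y$ is \emph{not} required; it suffices to control the obstruction on the open locus $U$ where $f$ is well behaved, which is exactly what hypothesis (\ref{item:tildeU}) provides.

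\textbf{Key steps.} First, I would reduce to the special fibre: by the definition of specialization in Section \ref{subsec:conventions} and the specialization theorem of \cite{Sch17}, it is enough to prove that $Y$ carries a nonzero unramified class that obstructs a decomposition of the diagonal, in the precise form demanded by that method. Second, I would set $\alpha' := g^\ast\alpha = f^\ast\alpha$, which by hypothesis (\ref{item:Yeta=Q}) is a nonzero element of $H^2_{nr}(\C(Y)\slash\C,\mu_2^{\otimes 2})$. Third, suppose for contradiction that a resolution $\widetilde X$ of $X$ admits an integral decomposition of the diagonal. Applying the specialization machinery, this would force the existence of a cycle-theoretic decomposition on $Y$ whose error terms are supported on divisors $E$ not dominating $\CP^2$, together with the universally $\CH_0$-trivial resolution $\widetilde U\to U$ of hypothesis (\ref{item:tildeU}) to make the argument of \cite{Sch17} run on the open part. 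Fourth — and this is where Proposition \ref{prop:alpha} enters decisively — the obstruction coming from $\alpha'$ is detected by its restrictions $\alpha'|_E$ to the exceptional/error divisors $E\subset Y$ not dominating $S=\CP^2$; Proposition \ref{prop:alpha} tells us precisely that each such restriction vanishes, $\alpha'|_E=0$. Thus the error terms contribute nothing, the nonzero class $\alpha'$ survives as a genuine obstruction, and we reach a contradiction with the existence of the decomposition of the diagonal.

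\textbf{Main obstacle.} The delicate part is the bookkeeping in step three: carrying out the argument of \cite{Sch17} without a global universally $\CH_0$-trivial resolution of $Y$. One must verify that the only places where resolution would ordinarily be needed are exactly the divisors $E$ not dominating $\CP^2$ — handled by Proposition \ref{prop:alpha} — while the horizontal direction is controlled by the resolution $\widetilde U\to U$ of the quadric-bundle locus furnished by hypothesis (\ref{item:tildeU}). Concretely, I expect the proof to restrict the universal class to $U$, use the properness of the generic fibre of $\widetilde U\to\CP^2$ to propagate the decomposition fibrewise, and then account for the complement $Y\setminus U$ via the vanishing of $\alpha'$ on vertical divisors. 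Ensuring compatibility of residues under the various field inclusions (as already used in the proof of Proposition \ref{prop:alpha}, cf.\ \cite[p.\ 143]{CTO}) is the technical heart, and keeping the $\CH_0$-triviality hypotheses aligned with the cited results of \cite{Sch17} is what makes the contradiction airtight.
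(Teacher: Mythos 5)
Your proposal is correct and follows essentially the same route as the paper: pull back $\alpha$ to a nontrivial unramified class $\alpha'$ on $Y$, use hypothesis (\ref{item:tildeU}) to get universal $\CH_0$-triviality over $U$, kill the restrictions of $\alpha'$ to the divisors not dominating $\CP^2$ via Proposition \ref{prop:alpha}, and conclude by the key technique of \cite[Section 4]{Sch17}. The one detail the paper makes explicit that you leave implicit is choosing (by Hironaka) a global resolution $\tau:\widetilde Y\longrightarrow Y$ with $\tau^{-1}(U)\cong\widetilde U$ and simple normal crossing complement, so that Proposition \ref{prop:alpha} is applied to smooth divisors inside the smooth model $\widetilde Y$ rather than to divisors in the possibly singular $Y$.
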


\begin{proof} 
Since $g^\ast \alpha\neq 0$ is unramified over $\C$ and unramified cohomology is a stable birational invariant, $\alpha':=f^\ast \alpha\in H^2(\C(Y),\mu_2^{\otimes 2})$ is a nontrivial class which is unramified over $\C$. 
By Hironaka's theorem, there exists a resolution of singularities  $\tau:\widetilde Y\longrightarrow Y$, such that $\tau^{-1}(U)$ identifies with the resolution of singularities $\widetilde U$ of $U$ given in (\ref{item:tildeU}), and such that $E:=\widetilde Y\setminus \widetilde U$ is a simple normal crossing divisor in $\widetilde Y$.
Our assumption on $\widetilde U$ then implies that $\tau^{-1}(U)\longrightarrow U$ is universally $\CH_0$-trivial.
Moreover, each component $E_i$ of $E$ is smooth and does not dominate $\CP^2$.
Therefore, Proposition \ref{prop:alpha} implies that the nontrivial class $\alpha'$ restricts to zero on $E_i$ for all $i$ and so 
Theorem \ref{thm:quadric:2} follows from the new key technique in \cite[Section 4]{Sch17}. 
\end{proof}

\begin{proof}[Proof of Theorem \ref{thm:quadric}]  
Condition (\ref{item:Q}) in Theorem \ref{thm:quadric} implies condition (\ref{item:tildeU}) in Theorem \ref{thm:quadric:2} with $\widetilde U=U$.
By Theorem \ref{thm:Arason}, conditions (\ref{item:Q}), (\ref{item:d}) and (\ref{item:H_nr}) in Theorem \ref{thm:quadric} imply condition (\ref{item:Yeta=Q}) in Theorem \ref{thm:quadric:2}. 
Theorem \ref{thm:quadric} follows therefore from Theorem \ref{thm:quadric:2}. 
\end{proof}

\section{Applications}

\subsection{Quadric surface bundles over $\CP^2$} \label{subsec:Application:P^2}
If the symmetric matrix $A=(a_{ij})$ in (\ref{eq:X}) is of diagonal form, i.e.\ $a_{ij}=0$ for all $i\neq j$, then we say that the corresponding quadric surface bundle $X$ is given by the quadratic form $q=\langle a_{00},\dots ,a_{33}\rangle$.
The condition that $X$ is flat over $\CP^2$ means that the $a_{ii}$ have no common zero.
If the homogeneous polynomials $a_{ii}$ degenerate and acquire common zeros, then the same formula still defines a weak quadric bundle as long as the $a_{ii}$ are nonzero and have no common factor.
We will use such degenerations in the proofs below.

\begin{proof}[Proof of Corollary \ref{cor:P(E)}]
In the notation of (\ref{eq:X}), let $A=(a_{ij})_{0\leq i,j\leq 3}$ be the symmetric matrix which corresponds to the very general quadric surface bundle $X$ of type $(d_0,d_1,d_2,d_3)$ over $\CP^2$.
We may without loss of generality assume $0\leq d_0\leq d_1\leq d_2\leq d_3$.  
If $d_1=0$, then also $d_0=0$ and $a_{ij}\in \C$ is constant for $i,j\in \{0,1\}$.
The quadric $\{a_{00}z_0^2+2a_{01}z_0z_1+a_{11}z_1^2=0\}$ has thus a point over $\C$ and so $X\longrightarrow \CP^2$ has a section. 
Hence $X$ is rational. 
If $d_i=1$ for all $i$, then $X$ is a hypersurface of bidegree $(1,2)$ in $\CP^2\times \CP^3$ and so projection to the second factor shows that $X$ is rational.
Since the $d_i$ have all the same parity, this shows that $X$ is rational if $\sum d_i\leq 4$ or $d_1=0$.

The case $d_i=2$ for all $i$ is due to \cite{HPT}; a quick proof follows from \cite[Proposition 11]{HPT} (= Proposition \ref{prop:HPT} above) and Theorem \ref{thm:quadric}.

It remains to deal with the case where 
$\sum_id_i\geq 8$, $d_1\geq 1$ and $d_3\geq 3$.
Recall that all $d_i$ are either even or odd.
Consider the  weak quadric surface bundle $Y_i:=\{q_i=0\}\subset \CP(\mathcal E)$ of type $(d_0,d_1,d_2,d_3)$, given by the diagonal forms
\begin{align*}
q_1&:=\langle z^{d_0},x^{d_1}, xyz^{d_2-2},yz^{d_3-3}F(x,y,z) \rangle ,\\
q_2&:=\langle z^{d_0},xz^{d_1-1},x^{d_2-1}y,yz^{d_3-3}F(x,y,z) \rangle ,\\
q_3&:= \langle z^{d_0},x^{d_1},yz^{d_2-1},xyz^{d_3-4}F(x,y,z) \rangle ,
\end{align*}
where $F$ is the quadratic polynomial from Proposition \ref{prop:HPT}.

Note that $Y_i$ is integral, because the entries in the diagonal form are coprime. 
Consider the natural projection $Y_i\longrightarrow \CP^2$.
The generic fibre is a smooth quadric surface $Q_i$ over $K=\C(\CP^2)$.
Setting $z=1$ shows that  $Q_1$ is given by the quadratic form  $q_1'=\langle1,x^{d_1},xy,yF(x,y,1)\rangle$,  $Q_2$ is given by $q_2'=\langle1,x,x^{d_2-1}y,yF(x,y,1)\rangle$, and $Q_3$ is given by $q_3'=\langle1,x^{d_1},y,xyF(x,y,1)\rangle$.

If $d_0$ is even, then so is $d_2$.
Multiplying through by $y$, absorbing squares and reordering the entries shows thus in this case that  $q_2'$ is similar to the quadratic form $q=\langle y,x,xy,F(x,y,1)\rangle$ from Proposition \ref{prop:HPT}.
If $d_0$ is odd, then so is $d_1$ and so $q_1'$ is isomorphic to $\langle 1,x,xy,yF(x,y,1)\rangle $ and $q_3'$ is isomorphic to $\langle 1,x,y,xyF(x,y,1)\rangle $.
Again, $q_1'$ and $q_3'$ are both similar to $q$.  
Hence, $H^2_{nr}(K(Q_i)\slash \C,\mu_2^{\otimes 2})\neq 0$ for $i\equiv d_0 \mod 2$ by \cite[Proposition 11]{HPT} (= Proposition \ref{prop:HPT} above). 

Since $d_1,d_2\geq 1$ and $d_3\geq 3$, the very general quadric surface bundle $X\subset \CP(\mathcal E)$ as in Corollary \ref{cor:P(E)} degenerates to $Y_2$.
If $d_0$ is odd, $X$ also degenerates to $Y_1$ or $Y_3$, depending on whether $d_2\geq 3$ or $d_2=1$.
Depending on the parity of $d_0$ and the size of $d_2$, we can choose one of the three degenerations together with Theorem \ref{thm:quadric} (or \ref{thm:quadric:2}) to conclude. 
\end{proof}

\begin{remark}
Pirutka informed me that for any total degree $d:=\sum_i d_i\geq 8$, one can reprove some cases of Corollary \ref{cor:P(E)} via degenerations to similar quadric surface bundles as in \cite{HPT}, for which \cite[Theorem 3.17]{Pirutka} applies, and for which one can compute universally $\CH_0$-trivial resolutions explicitly, see \cite{ABP}.
\end{remark}

\subsection{Quadric surface bundles over $\CP^1\times \CP^1$}

As a second example where Theorem \ref{thm:quadric} applies, we consider quadric surface bundles $X$ over $\CP^1\times \CP^1$ that are given by a line bundle valued quadratic form $q:\mathcal E\longrightarrow \mathcal O(m,n)$, where  $\mathcal E=\bigoplus_{i=0}^3 \mathcal O(-p_i,-q_i)$ is split. 
Locally, $X:=\{q=0\}\subset \CP(\mathcal E)$ is given by (\ref{eq:X}) where $a_{ij}$ is a global section 
of $\mathcal O(p_i+p_j+m,q_i+q_j+n)$. 
If $a_{ij}=0$ for $i\neq j$, we say that $X$ is given by the quadratic form $q=\langle a_{00},\dots ,a_{33}\rangle$.
If the $a_{ii}$ degenerate and acquire common zeros, then the same formulas still define a hypersurface in $\CP(\mathcal E)$ which is a weak quadric surface bundle over $\CP^2$ as long as the $a_{ii}$ are nonzero and have no common factor.
The deformation type of $X$ depends only on the integers $d_i:=m+2p_i$ and $e_i:=n+2q_i$, and we call $(d_i,e_i)_{0\leq i\leq 3}$ the type of $X$. 
Note that the $d_i$, as well as the $e_i$ have the same parity for all $i$. 
We say that the type $(d_i,e_i)_{0\leq i\leq 3}$ is lexicographically ordered, if $d_i< d_{i+1}$ or $d_i=d_{i+1}$ and $e_i\leq e_{i+1}$.

\begin{corollary} \label{cor:P1xP1}  
Let  $X\longrightarrow \CP^1\times \CP^1$ be a very general quadric surface bundle of lexicographically ordered type $(d_i,e_i)_{0\leq i\leq 3}$, with $d_i,e_i\geq 0$ and $d_3,e_3\geq 3$. 
Then, 
\begin{enumerate}
\item $X$ is rational if $d_2=0$, $d_1=e_1=e_0=0$ or $e_0=e_1=e_2=0$;
\item $X$ is not stably rational otherwise.
\end{enumerate}
\end{corollary}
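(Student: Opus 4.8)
The plan is to separate the two rulings of $\CP^1\times\CP^1$ and treat the rational cases by producing a rational section, the non-rational cases by degenerating $X$ to an integral weak quadric surface bundle whose generic fibre is similar to the quadric of Proposition~\ref{prop:HPT}, and then applying Theorem~\ref{thm:quadric}. For the rational cases, diagonalise $X$ as $\langle a_{00},\dots,a_{33}\rangle$. First I would note that if $d_0=d_1=e_0=e_1=0$, then $a_{00},a_{11}\in\C$ and the binary form $\langle a_{00},a_{11}\rangle$ is isotropic over the algebraically closed field $\C$; if $d_2=0$, i.e.\ $d_0=d_1=d_2=0$, then $a_{00},a_{11},a_{22}\in\C[t]$ and the ternary form $\langle a_{00},a_{11},a_{22}\rangle$ is isotropic over the $C_1$ field $\C(t)$ by Tsen's theorem; the case $e_0=e_1=e_2=0$ is symmetric, over $\C(s)$. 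In each case the isotropic vector (with $z_3=0$) gives a rational section, hence a $K$-point of the generic fibre, where $K=\C(\CP^1\times\CP^1)$; since a smooth quadric with a rational point is $K$-rational and $K$ is purely transcendental over $\C$, the variety $X$ is rational.

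For the non-rational cases I would identify $K$ with $\C(s,t)$, set $x=s$ and $y=t$, and search for a degeneration $Y=\{\langle a_{00},\dots,a_{33}\rangle=0\}$ of $X$ whose generic fibre $Q$ is similar over $K$ to $q=\langle y,x,xy,F(x,y,1)\rangle$. Since similar quadratic forms cut out the same quadric, $Q$ is isomorphic to the quadric of Proposition~\ref{prop:HPT}, so $H^2_{nr}(K(Q)\slash\C,\mu_2^{\otimes2})\neq0$ and $\discr(Q)$ is the class of $F$, which is nontrivial; Theorem~\ref{thm:quadric} then applies to $Y$. Writing each entry as an affine polynomial $\bar a_{ii}(s,t)$, the type imposes only $\deg_s\bar a_{ii}\le d_i$ and $\deg_t\bar a_{ii}\le e_i$ (one homogenises and pads with the coordinates $s_0,t_0$, which restrict to $1$ on the affine chart). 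The model I would use is $\langle1,s,t,stF\rangle$, which is similar to $q$ (multiply through by $st$ and absorb squares) and whose four classes in $K^\ast\slash(K^\ast)^2$ have minimal bidegrees $(0,0),(1,0),(0,1),(3,3)$.

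The combinatorial step is to assign these four classes to the four slots, ordered so that $d_0\le d_1\le d_2\le d_3$, with each minimal bidegree $\le(d_i,e_i)$ componentwise. I would place $stF$ in the top slot (allowed since $d_3,e_3\ge3$) and then place $s$ and $t$ in two \emph{distinct} slots among $\{0,1,2\}$ with $d_i\ge1$ and $e_i\ge1$ respectively, the remaining slot carrying the constant $1$. A short Hall-type argument should show that such distinct slots fail to exist exactly when $d_0=d_1=d_2=0$, when $e_0=e_1=e_2=0$, or when two of the slots $\{0,1,2\}$ equal $(0,0)$ — that is, precisely in the three rational cases above. Thus in every non-rational case the assignment exists, $X$ degenerates to the corresponding integral $Y$, and $X$ is not stably rational.

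The hard part will not be the fitting but the integrality of $Y$, which by the usual criterion requires the four homogeneous entries to be coprime. Naive monomial representatives may share a factor $s_0$ or $t_0$, and one cannot rescale a single entry by a square without disturbing its bidegree, so I expect a case analysis on the common parities $\delta\equiv d_i$ and $\epsilon\equiv e_i\pmod2$. The resolution is to choose, in each slot, monomial exponents of the prescribed parity so that the slot carrying the trivial class becomes a pure $(s_0,t_0)$-monomial while the slot carrying the class of parity $(\delta,\epsilon)$ becomes a pure $(s_1,t_1)$-monomial; these two entries are then automatically coprime, ruling out any common factor. Because the four classes of $\langle1,s,t,stF\rangle$ exhaust all of $(\Z\slash2)^2$, such a parity-compatible choice always exists, after which coprimality and the smoothness of the generic fibre are routine to verify.
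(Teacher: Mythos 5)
Your strategy coincides with the paper's: rational cases via sections, non-rational cases by degenerating $X$ to a diagonal weak quadric surface bundle over $\CP^1\times\CP^1$ whose generic fibre is similar to the quadric of Proposition \ref{prop:HPT}, then Theorem \ref{thm:quadric}; and your Hall-type placement of the classes $1,s,t,stF$ (with $stF$ in slot $3$) is correct and recovers exactly the paper's trichotomy into its Cases A, B, C together with the three rational exceptions. The genuine gap is in your coprimality recipe, and it is not merely an omission. In the parity case $(\delta,\epsilon)=(0,0)$, i.e.\ all $d_i$ and all $e_i$ even (e.g.\ type $(2,2),(2,2),(2,2),(4,4)$), the class of parity $(\delta,\epsilon)$ \emph{is} the trivial class, so your prescription asks a single slot to be simultaneously a pure $(s_0,t_0)$-monomial and a pure $(s_1,t_1)$-monomial, which is impossible unless that slot has bidegree $(0,0)$; the justification ``the four classes exhaust $(\Z\slash 2)^2$'' is precisely what breaks down here. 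Worse, in this case the two-coprime-entries strategy is unsalvageable in principle: if a homogeneous entry of even bidegree restricts on the affine chart to $s$ times a square in $K^\ast$, then its order of vanishing along $\{s_1=0\}$ is odd, and by evenness of the bidegree so is its order along $\{s_0=0\}$; hence \emph{every} admissible representative of the class $s$ is divisible by $s_0s_1$, every representative of $t$ by $t_0t_1$, and every representative of $stF$ by $s_0s_1t_0t_1$. So any two of the three nontrivial entries always share a factor, and the trivial entry can be coprime to another one only when its bidegree degenerates to zero. For an infinite family of types covered by the corollary, no pair of entries can be made coprime at all.

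The repair is a three-entry argument, which is what the paper's explicit forms in Subcases A.1, B.1 and C.1 implement: in the all-even case represent the trivial class by the pure $(s_1,t_1)$-monomial $s_1^{d_{i_0}}t_1^{e_{i_0}}$, choose the class-$t$ entry with $s$-part $s_0^{d_j}$ (so it is not divisible by $s_1$) and the class-$s$ entry with $t$-part $t_0^{e_k}$ (so it is not divisible by $t_1$); then a common irreducible factor of all four entries would have to divide $s_1^{d_{i_0}}t_1^{e_{i_0}}$, hence be $s_1$ or $t_1$, and both are excluded. With this modification (your recipe does work in the other three parity classes, where the class of parity $(\delta,\epsilon)$ is $s$, $t$ or $stF$), the argument closes. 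One smaller point: the very general $X$ is \emph{not} diagonal, so ``diagonalise $X$'' is not legitimate if you want to retain control of the bidegrees; in the rational cases you should instead argue with the full upper-left $2\times 2$, resp.\ $3\times 3$, block of the symmetric matrix, which is constant, resp.\ has coefficients in $\C(t)$ and defines a conic over the $C_1$ field $\C(t)$ --- this block is what yields the section, exactly as in the paper.
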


All examples in Corollary \ref{cor:P1xP1} deform to smooth rational varieties of dimension four, see for instance \cite[Section 3.5]{Sch17}.
The condition $d_3,e_3\geq 3$ in the above theorem could be replaced by a weaker but more complicated assumption; we collect in Corollary \ref{cor:P1xP1:2} below the remaining cases where our method works. 

\begin{proof} [Proof of Corollary \ref{cor:P1xP1}]
Let $A=(a_{ij})_{0\leq i,j\leq 3}$ be a symmetric matrix, where $a_{ij}$ is a very general global section of $\mathcal O_{\CP^1\times \CP^1}(p_i+p_j+m,q_i+q_j+n)$ and consider the corresponding quadric surface bundle $X$ over $\CP^1\times \CP^1$.
Here the integers $d_i:=2p_i+m$ and $e_i:=2q_i+n$ are assumed to satisfy the assumptions of Corollary \ref{cor:P1xP1}, i.e.\ $(d_i,e_i)_{0\leq i\leq 3}$ is lexicographically ordered with $d_i,e_i\geq 0$ and $d_3,e_{3}\geq 3$.

If $d_1=e_1=e_0=0$, then $(a_{ij})_{0\leq i,j\leq 1}$ is a constant matrix and so $X$ has a section. 
If $d_2=0$, then $(a_{ij})_{0\leq i,j\leq 2}$ is a matrix of polynomials which are constant along the first factor. 
Since any conic bundle over $\CP^1$ has a section, $X$ admits also a section.
If $e_0=e_1=e_2=0$, then $(a_{ij})_{0\leq i,j\leq 2}$ is a matrix of polynomials, constant along the second factor, and so $X$ has a section as before.
Since $X$ is general and $d_i,e_i\geq 0$, the generic fibre of $X$ over $\CP^1\times \CP^1$ is a smooth quadric surface and so $X$ is rational in each of the above cases. 

The case where $(e_0,e_1,e_2)\neq(0,0,0)$, $(d_1,e_0,e_1)\neq (0,0,0)$ and $d_2\neq 0$ is similar to the proof of Corollary \ref{cor:P(E)}.  
The main point is that we can always degenerate $X$ to weak quadric surface bundle $Y$ over $\CP^1\times \CP^1$ whose generic fibre is isomorphic to the example in Proposition \ref{prop:HPT}.
To find such a degeneration, we consider coordinates $x_0,x_1$ (and $y_0,y_1$) on the first (respectively second) factor of $\CP^1\times \CP^1$ and consider the bidegree $(2,2)$ polynomial 
\begin{align} \label{eq:h}
h:=
x_1^2y_0^{2}+x_0^{2}y_1^2+x_0^{2}y_0^{2}
-2(x_1y_1x_0y_0+x_1x_0y_0^2+y_1y_0x_0^2) .
\end{align}
We then start with the quadratic form $q=\langle 1,y_1,x_1,x_1y_1h \rangle$.
Putting $x_0=y_0=1$ shows that the corresponding quadric surface over $K=\C(\CP^1\times \CP^1)$ is isomorphic to the one in Proposition \ref{prop:HPT}. 
The point is that the isomorphism type of this quadric surface does not change if we perform any of the following operations to the quadratic form $q$:
\begin{itemize} 
\item multiply some entries with even powers of $x_1$ and $y_1$;
\item multiply some entries with arbitrary powers of $x_0$ and $y_0$;
\item reorder the entries of the quadratic form.
\end{itemize} 
Our aim is to produce a quadratic form of given type $(e_i,d_i)_{0\leq i\leq 3}$ whose entries are coprime, since the latter guarantees that the associated quadratic form defines a weak quadric surface bundle $Y$ over $\CP^1\times \CP^1$.
Once this is achieved, Corollary \ref{cor:P1xP1} will follow from Proposition \ref{prop:HPT} 
and Theorem \ref{thm:quadric}. 

By assumption, $d_2\geq 1$, and if $e_0=e_1=0$, then $d_1\geq 1$ and $e_2\geq 1$.
This leads to Cases A, B and C below.
We divide into further subcases and provide each time a quadratic form (produced via the above process) with the properties we want. 
Recall that the $d_i$, as well as the $e_i$, have the same parity.

\textbf{Case A:}
$e_1\geq 1$.

\emph{Subcase A.1.} 
If $d_0$ and $e_0$ are even, then we take
$$
\langle x_1^{d_0}y_1^{e_0},x_0^{d_1}y_0^{e_1-1}y_1,x_0^{d_2-1}x_1y_0^{e_2},x_0^{d_3-3}y_0^{e_3-3}x_1y_1h  \rangle .
$$

\emph{Subcase A.2.} 
If $d_0$ is odd and $e_0$ is even, then we take
$$
\langle x_0^{d_0}y_1^{e_0},x_0^{d_1}y_0^{e_1-1}y_1,x_1^{d_2}y_0^{e_2},x_0^{d_3-3}y_0^{e_3-3}x_1y_1h  \rangle .
$$

\emph{Subcase A.3.} 
If $d_0$ is even and $e_0$ is odd, then we take
$$
\langle x_1^{d_0}y_0^{e_0},x_0^{d_1}y_1^{e_1},x_0^{d_2-1}x_1y_0^{e_2},x_0^{d_3-3}y_0^{e_3-3}x_1y_1h  \rangle .
$$

\emph{Subcase A.4.} 
If $d_0$ and $e_0$ are odd, then we take
$$
\langle x_0^{d_0}y_0^{e_0},x_0^{d_1}y_1^{e_1},x_1^{d_2}y_0^{e_2},x_0^{d_3-3}y_0^{e_3-3}x_1y_1h  \rangle .
$$

\textbf{Case B:} $e_0\geq 1$ and $e_1=0$ (hence $e_i$ is even for all $i$).

\emph{Subcase B.1.} 
If $d_0$ is even, then we take
$$
\langle x_1^{d_0}y_0^{e_0-1}y_1,x_0^{d_1},x_0^{d_2-1}x_1y_0^{e_2},x_0^{d_3-3}y_0^{e_3-3}x_1y_1h  \rangle .
$$

\emph{Subcase B.2.} 
If $d_0$ is odd, then we take
$$
\langle x_0^{d_0}y_0^{e_0-1}y_1,x_0^{d_1},x_1^{d_2}y_0^{e_2},x_0^{d_3-3}y_0^{e_3-3}x_1y_1h  \rangle .
$$

\textbf{Case C:}
$d_1,e_2\geq 1$ and $e_0=e_1=0$ (hence $e_i$ is even for all $i$).

\emph{Subcase C.1.} 
If $d_0$ is even, then we take
$$
\langle x_1^{d_0},x_0^{d_1-1}x_1,x_0^{d_2}y_0^{e_2-1}y_1,x_0^{d_3-3}y_0^{e_3-3}x_1y_1h  \rangle .
$$

\emph{Subcase C.2.} 
If $d_0$ is odd, then we take
$$
\langle x_0^{d_0},x_1^{d_1},x_0^{d_2}y_0^{e_2-1}y_1,x_0^{d_3-3}y_0^{e_3-3}x_1y_1h  \rangle .
$$

In each of the above cases, putting $x_0=y_0=1$ and reordering the factors if necessary shows that the corresponding weak quadric surface bundle $Y$ over $\CP^1\times \CP^1$ has generic fibre which is isomorphic to $\langle1,y_1,x_1,x_1y_1F(x_1,y_1,1)\rangle$.
Corollary \ref{cor:P1xP1} follows therefore from \cite[Proposition 11]{HPT} (see Proposition \ref{prop:HPT} above) and Theorem \ref{thm:quadric}. 
\end{proof}

\begin{corollary} \label{cor:P1xP1:2}
Let $(d_i,e_i)_{0\leq i \leq 3}$ be a lexicographically ordered tuple of pairs of non-negative integers with $d_i+d_j$ and $e_i+e_j$ even for all $i,j$.
Suppose that one of the following holds:
\begin{enumerate}
\item $d_1\geq 1$, $d_3\geq 2$, $e_1+e_2\geq 1$ and $e_3\geq 3$; \label{item1:P1xP1:2}
\item $d_1\geq 1$, $d_3\geq 2$, $e_0\geq 1$, $e_1+e_2\geq 1$ and $e_2\geq 2$. \label{item2:P1xP1:2}
\end{enumerate}
Then a very general complex projective quadric surface bundle $X$ over $\CP^1\times \CP^1$ of type $(d_i,e_i)_{0\leq i \leq 3}$ is not stably rational.
\end{corollary}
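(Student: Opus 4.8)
The plan is to follow the template of the proof of Corollary~\ref{cor:P1xP1}. I would degenerate the very general $X$ to a weak quadric surface bundle $Y$ over $\CP^1\times\CP^1$ whose generic fibre over $K=\C(\CP^1\times\CP^1)$ is isomorphic to the Hassett--Pirutka--Tschinkel quadric surface of Proposition~\ref{prop:HPT}, and then invoke Proposition~\ref{prop:HPT} together with Theorem~\ref{thm:quadric} to conclude that $X$ is not stably rational. Concretely, this means writing down a diagonal quadratic form $\langle b_0,b_1,b_2,b_3\rangle$ in which each $b_i$ is bihomogeneous of the prescribed bidegree $(d_i,e_i)$, the family $b_0,\dots,b_3$ has no common factor (so that $Y$ is integral, hence a genuine weak quadric surface bundle), and setting $x_0=y_0=1$ and absorbing squares recovers the form $\langle 1,y_1,x_1,x_1y_1F(x_1,y_1,1)\rangle$ of Proposition~\ref{prop:HPT}.

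The one new device compared with Corollary~\ref{cor:P1xP1} is a more economical placement of the essential polynomial $h$ of \eqref{eq:h}, which is what allows the hypothesis $d_3\geq 3$ to be weakened to $d_3\geq 2$. There the $h$-carrying entry is always $x_0^{d_3-3}y_0^{e_3-3}\,x_1y_1h$, of base bidegree $(3,3)$; instead I would exploit the similarities
\begin{align*}
\langle 1,y_1,x_1,x_1y_1h\rangle &\sim \langle 1,x_1,x_1y_1,y_1h\rangle \\
&\sim \langle 1,y_1,x_1y_1,x_1h\rangle \sim \langle y_1,x_1,x_1y_1,h\rangle ,
\end{align*}
obtained by multiplying the form through by $x_1$, $y_1$, or $x_1y_1$ and absorbing the resulting squares. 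Each of these presents the same quadric surface over $K$, namely that of Proposition~\ref{prop:HPT}, but their essential entries $y_1h$, $x_1h$ and $h$ now have base bidegrees $(2,3)$, $(3,2)$ and $(2,2)$. The two hypotheses correspond to these smaller presentations (the presentations of essential bidegree $(3,2)$ and $(3,3)$ covering the subcases where $d_3$ is larger): in case~(\ref{item1:P1xP1:2}) the pair $(d_0,e_0)=(0,0)$ is permitted, so a genuine constant entry is needed, and I would use $\langle 1,x_1,x_1y_1,y_1h\rangle$, carrying $h$ in a slot of bidegree $(2,3)$ up to $x_0,y_0$-multiples, which is available because $d_3\geq 2$ and $e_3\geq 3$; in case~(\ref{item2:P1xP1:2}) one has $e_0\geq 1$, so no slot need carry a constant, and the presentation $\langle y_1,x_1,x_1y_1,h\rangle$ lets the essential factor sit in a slot of bidegree $(2,2)$, whose existence is ensured by the size conditions $d_3\geq 2$ and $e_2\geq 2$ together with the parity constraints.

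It then remains to place the three remaining monomial entries into the free slots, multiplying each by arbitrary powers of $x_0,y_0$ and even powers of $x_1,y_1$ to reach the prescribed bidegrees; the conditions $d_1\geq 1$ and $e_1+e_2\geq 1$ are exactly what make available a slot of the right shape for $x_1$ (which needs $d_i\geq 1$), for $y_1$ (which needs $e_i\geq 1$), and for $x_1y_1$ (which needs both). As in Corollary~\ref{cor:P1xP1} this descends into a finite case analysis governed by the common parity of the $d_i$ and of the $e_i$, i.e.\ by the parities of $d_0$ and $e_0$, and I expect the genuine difficulty to be purely the bookkeeping: in each subcase one must display four entries that simultaneously realise the correct bidegrees and are coprime as a family. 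Coprimality is the delicate point, precisely because the admissible degrees are now small; it can be arranged by the same mechanism as before, namely ensuring that each of $x_0,x_1,y_0,y_1$ is missing from at least one entry, with the irreducible factor $h$ appearing in exactly one entry. Once this has been carried out in every subcase, Proposition~\ref{prop:HPT} and Theorem~\ref{thm:quadric} finish the proof.
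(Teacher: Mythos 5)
Your proposal is correct and takes essentially the same route as the paper: the paper's proof likewise starts from $q_1=\langle 1,x_1,x_1y_1,y_1h\rangle$ for case~(\ref{item1:P1xP1:2}) and $q_2=\langle y_1,x_1,x_1y_1,h\rangle$ for case~(\ref{item2:P1xP1:2}), applies the padding/reordering procedure of Corollary~\ref{cor:P1xP1} to reach a coprime diagonal form of the prescribed type, and concludes via Proposition~\ref{prop:HPT} and Theorem~\ref{thm:quadric}. Like the paper, you leave the final case-by-case bookkeeping (placement of the remaining entries and coprimality) to the reader.
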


\begin{proof}
We start with the quadratic forms $q_1:=\langle 1,x_1,x_1y_1,y_1h \rangle$ and $q_2:=\langle y_1,x_1,x_1y_1,h \rangle$, where $h$ is as in (\ref{eq:h}). 
If condition (\ref{item1:P1xP1:2}) holds, then we can use $q_1$ and if (\ref{item2:P1xP1:2}) holds, then we can use $q_2$ to obtain via the procedure explained in the proof of Corollary \ref{cor:P1xP1}, a quadratic form of type $(d_i,e_i)_{0\leq i \leq 3}$ whose coefficients are coprime. 
This yields a special fibre to which Theorem \ref{thm:quadric} applies.
The details are similar as in the proof of Corollary \ref{cor:P1xP1} and we leave them to the reader.
\end{proof}

\section*{Acknowledgements} 
I am very grateful to O.\ Benoist for useful correspondence and comments on a previous version. 
I am very grateful to the referees and to J.-L.\ Colliot-Th\'el\`ene for many suggestions which improved the exposition. 
Thanks to A.\ Pirutka and C.\ Camere for useful comments.  
The results of this article were conceived when the author was member of the SFB/TR 45. 



\begin{thebibliography}{HKLR} 

\bibitem[AO]{AO}
H.\ Ahmadinezhad and T.\ Okada, {\em Stable rationality of higher dimensional conic bundles}, arXiv:1612.04206.
 

\bibitem[A]{arason}
J.\ Kr.\ Arason, {\em Cohomologische Invarianten quadratischer Formen}, J.\ Algebra \textbf{36} (1975), 448--491.

\bibitem[AM]{artin-mumford}
M.\ Artin and D.\ Mumford, {\em Some elementary examples of unirational varieties which are not rational}, Proc.\ London Math.\ Soc.\ (3) \textbf{25} (1972),75--95.

\bibitem[ABP]{ABP}
A.\ Auel, C.\ B\"ohning and A.\ Pirutka, {\em Stable rationality of quadric and cubic surface bundle fourfolds}, Preprint 2017, arXiv:1710.07270.

\bibitem[ACTP]{ACTP}
A.\ Auel, J.-L.\ Colliot-Th\'el\`ene and R.\ Parimala, {Universal unramified cohomology of cubic fourfolds containing a plane},  
 in Brauer groups and obstruction problems: moduli spaces and arithmetic (Palo Alto, 2013), Progress in Mathematics, vol.\ \textbf{320}, Birkh\"auser Basel, 2017, 29--56. 
 
\bibitem[APS]{APS}
A.\ Auel, R.\ Parimala and V.\ Suresh, {\em Quadric Surface Bundles over Surfaces}, Doc.\ Math.\ Extra Volume Merkurjev (2015), 31--70.

\bibitem[ABGP]{ABGP}
A.\ Auel, C.\ B\"ohning, H.-C.\ Graf v. Bothmer and A.\ Pirutka, {\em Conic bundles with nontrivial unramified Brauer group over threefolds}, arXiv:1610.04995.

 

\bibitem[B]{beauville4}
A.\ Beauville, {\em A very general sextic double solid is not stably rational}, Bull.\ London Math.\ Soc.\ \textbf{48} (2016), 312--324. 



\bibitem[BB]{BB}
C.\ B\"ohning and H.-C.\ Graf v.\ Bothmer, {\em On stable rationality of some conic bundles and moduli spaces of Prym curves}, arXiv:1605.03029,
to appear in Comm.\ Math.\ Helvetici.
 
\bibitem[CKKM]{CKKM}
C.\ Camere, G.\ Kapustka, M.\ Kapustka and G.\ Mongardi, {\em Verra fourfolds, twisted sheaves and the last involution}, arXiv:1706:08955.
 

\bibitem[CT]{CT}
J.-L.\ Colliot-Th\'el\`ene, {\em Birational invariants, purity and the Gersten conjecture}, K-theory and algebraic geometry: connections with quadratic forms and divison algebras (Santa Barbara, CA, 1992), 1--64, Proc.\ Sympos.\ Pure Math.\, \textbf{58}, AMA, Providence,RI, 1995.

\bibitem[CTO]{CTO}
J.-L.\ Colliot-Th\'el\`ene and M.\ Ojanguren, {\em Vari\'et\'es unirationnelles non rationnelles: au-del\`a de l'exemple d'Artin et Mumford}, Invent.\ Math.\ \textbf{97} (1989), 141--158.

\bibitem[CTP1]{CT-Pirutka}
J.-L.\ Colliot-Th\'el\`ene and A.\ Pirutka, {\em Hypersurfaces quartiques de dimension 3 \ : non rationalit\'e stable},  Annales Sc.\ \'Ec.\ Norm.\ Sup.\ \textbf{49} (2016), 371--397.

\bibitem[CTP2]{CT-Pirutka2}
J.-L.\ Colliot-Th\'el\`ene and A.\ Pirutka, {\em Rev\^etements cycliques qui ne sont pas stablement rationnels}, Izvestiya RAN, Ser.\ Math.\ \textbf{80} (2016), 35--47. 


\bibitem[HKT]{HKT}
B.\ Hassett, A.\ Kresch and Y.\ Tschinkel, {\em Stable rationality and conic bundles}, Math.\ Annalen \textbf{365} (2016), 1201--1217.

\bibitem[HPT1]{HPT}
B.\ Hassett, A.\ Pirutka and Y.\ Tschinkel, {\em Stable rationality of quadric surface bundles over surfaces}, arXiv:1603.09262.

\bibitem[HPT2]{HPT2}
B.\ Hassett, A.\ Pirutka and Y.\ Tschinkel, {\em A very general quartic double fourfold is not stably rational}, arXiv:1605.03220, to appear in Algebraic Geometry.

\bibitem[HPT3]{HPT3}
B.\ Hassett, A.\ Pirutka and Y.\ Tschinkel, {\em Intersections of three quadrics in $\CP^7$}, arXiv:1706.01371.

\bibitem[IKKR]{IKKR}
A.\ Iliev, G.\ Kapustka, M.\ Kapustka and K.\ Ranestad, {\em Hyperk\"ahler fourfolds and Kummer surfaces}, arXiv:1603.00403. 
 

\bibitem[KRS]{KRS}
B.\ Kahn, M.\ Rost and R.\ Sujatha, {\em Unramified cohomology of quadrics I}, Amer.\ J.\ Math.\ \textbf{120} (1998), 841--891. 

\bibitem[L]{lam}
T.Y.\ Lam, {\em The algebraic theory of quadratic forms}, W.A.\ Benjamin, Inc., Massachusetts, 1973.

\bibitem[Me]{merkurjev}
A.\ S.\ Merkurjev, {\em Unramified elements in cycle modules}, J.\ London Math.\ Soc.\ \textbf{78} (2008), 51--64.

\bibitem[Mi]{milne}
J.S.\ Milne, {\em \'Etale cohomology}, Princeton University Press, Princeton, NJ, 1980. 
 
\bibitem[O]{okada}
T.\ Okada, {\em Stable rationality of cyclic covers of projective spaces}, arXiv:1604.08417.
 
\bibitem[P]{Pirutka}
A.\ Pirutka, {\em Varieties that are not stably rational, zero-cycles and unramified cohomology}, to appear in the proceedings of the AMS Algebraic Geometry Summer Institute, SLC 2015. 

\bibitem[Sch]{Sch17}
S.\ Schreieder, {\em On the rationality problem for quadric bundles}, Preprint 2017, arXiv:1706.01356v4. 
 

\bibitem[T]{totaro}
B.\ Totaro, {\em Hypersurfaces that are not stably rational}, J.\ Amer.\ Math.\ Soc.\ \textbf{29} (2016), 883--891. 
 

\bibitem[V]{voisin}
C.\ Voisin, {Unirational threefolds with no universal codimension 2 cycle.} Invent.\ Math.\ \textbf{201} (2015), 207--237.



\end{thebibliography}
\end{document}